\providecommand{\U}[1]{\protect\rule{.1in}{.1in}}
\newtheorem{theorem}{Theorem}[section]
\newtheorem{proposition}[theorem]{Proposition}
\def\limfunc#1{\mathop{\rm #1}}
\def\func#1{\mathop{\rm #1}\nolimits}
\begin{document}

\title{Maximum principle for an optimal control problem associated to a SPDE
with nonlinear boundary conditions}
\author{Stefano Bonaccorsi$^{a}$,\quad Adrian Z\u{a}linescu$^{a,b}$ \\
\bigskip\\
{\small $^{a}$~Dipartimento di Matematica, Universit\`{a} di Trento, via
Sommarive 14, 38123 Povo (Trento), Italy.}\\
{\small $^{b}$~\textquotedblleft O. Mayer\textquotedblright\ Mathematics
Institute of the Romanian Academy, Ia\c{s}i,}\\
{\small Carol I Blvd., no. 8, Ia\c{s}i, 700506, Romania.}}
\date{}
\maketitle

\begin{abstract}
We study a control problem where the state equation is a nonlinear partial
differential equation of the calculus of variation in a bounded domain,
perturbed by noise. We allow the control to act on the boundary and set
boundary conditions which result in a stochastic differential equation for
the trace of the solution on the boundary. This work provides necessary and
sufficient conditions of optimality in the form of a maximum principle. We
also provide a result of existence for the optimal control in the case where
the control acts linearly.\newline
\textbf{Keywords:} stochastic control, maximum principle, stochastic
evolution equation, backward stochastic differential equation\newline

\textit{MSC 2010}: 93E20, 60H15, 60H30
\end{abstract}

\setcounter{footnote}{1}

\section{Introduction}

Let $\mathcal{O}\subseteq \mathbb{R}^{n}$ be a bounded smooth domain with
regular boundary $\Gamma :=\partial \mathcal{O}$ and outward normal unit
vector $\nu $; we also fix a terminal time $T>0$. We fix a nonlinear
operator $\func{div}\mathbf{a}(x,\nabla y)$ of Leray--Lions type, and we
consider the following controlled nonlinear diffusion equation with
dynamical boundary conditions:

\begin{equation}
\left\{
\begin{tabular}{ll}
$dy(t,x)=\func{div}\mathbf{a}(x,\nabla y)\,dt+b\,dw(t,x),$ & $\left(
t,x\right) \in\left( 0,T\right) \times\mathcal{O};\medskip$ \\
$dy(t,\xi)=\left[ -\mathbf{a}(\xi,\nabla y)\cdot\nu-\gamma(\xi,y(t,\xi
),u(t,\xi))\right] \,dt+\tilde{b}\,d\tilde{w}(t,\xi),$ & $\left(
t,\xi\right) \in\left( 0,T\right) \times\Gamma;\medskip$ \\
$y(0,x)=y_{0}(x).$ & $x\in\mathcal{\bar{O}}.$%
\end{tabular}
\ \ \ \ \ \ \ \ \ \ \ \ \ \ \ \right.   \label{Eq_state0}
\end{equation}
$w$ and $\tilde{w}$ are independent infinite dimensional Wiener processes
with values in $L^{2}(\mathcal{O})$ and $L^{2}(\Gamma)$, respectively. We
assume that $u$ is an admissible control acting on the boundary and we study
the problem of minimizing the cost functional%
\begin{align}
J(u):= & \mathbb{E}\left\{ \int_{0}^{T}\left[ \int_{\mathcal{O}%
}\ell(x,y(s,x))\,dx+\int_{\Gamma}\bar{\ell}(\xi,y(s,\xi),u(s,\xi))\,d\xi %
\right] \,ds\right\}  \label{Cost_functional0} \\
& +\mathbb{E}\left[ \int_{\mathcal{O}}\psi(x,y(T,x))dx+\int_{\Gamma}\bar{\psi%
}(\xi,y(T,\xi))\,d\xi\right]  \notag
\end{align}

Equations of the form (\ref{Eq_state0}), called fully parabolic boundary
value problem in the seminal paper of Escher \cite{Esc93}, have been
considered also in the stochastic setting, see \textit{e.g.}\ Chueshov and
Schmalfu\ss\ \cite{ChuSch04}, Bonaccorsi and Ziglio \cite{BonZig14} and
Barbu, Bonaccorsi and Tubaro \cite{BarBonTub15}. Such problems are used to
describe a wide variety of physical processes, among which we mention heat
propagation in a plasma gas, population dynamics and other nonlinear
diffusive phenomena (e.g., see \cite{DaPZab92}). It should be noticed that
boundary conditions of the form prescribed in \eqref{Eq_state0} are of a
non-standard type; nevertheless, dynamical boundary conditions, \emph{i.e.}
involving formally a time derivative of the solution on the boundary are
used as a model in several physical systems
, see the paper \cite{Goldstein2006} for a derivation and a physical
interpretation in the case of the heat equation; further applications are
given to heat transfer in a solid imbedded in a moving fluid \cite[\S 7.2]%
{Vra03}, surface gravity waves in oceanic models (\cite{DiaTel99}, \cite%
{DuaGaoSch02}, \cite{Mul97}), as well as in fluid dynamics \cite{Su93},
phase separation phenomena \cite{DuaYan07}, and this list is far from being
exhaustive.

In our setting, existence for the solution of equation (\ref{Eq_state0}) is
proven in \cite{BonZig14} or \cite{BarBonTub15} via an operatorial approach
which allows to rewrite the system as a stochastic differential equation in
the product space $H^{1}(\mathcal{O})\times L^{2}(\Gamma)$. A similar
approach was recently developed for a class of deterministic parabolic
equation with Wentzell boundary conditions in \cite{BarFavMar15}.

Our objective is to control such a system through the boundary, considering
that in practice it is easier to implement boundary control than distributed
parameter controls (see \cite{DebFuhTes07} for a discussion about the
subject). Such control problems have been widely studied in the
deterministic literature (see \cite{LasTri91}) and have been addressed in
the stochastic case as well (see \cite{DunMasPas98}, \cite{GozRouSwi00},
\cite{Ich85}, \cite{Mas95}, \cite{DebFuhTes07}). With regard to dynamical
boundary conditions, we mention that an associated control problem have
already been addressed by Bonaccorsi, Confortola, Mastrogiacomo \cite%
{BonConMas08}, following the backward SDEs (BSDEs, for short) approach
introduced by Fuhrman and Tessitore in \cite{FuhTes02} in an abstract
setting. We emphasize that in general the above papers concern
one-dimensional domains.

The present article deals with the control problem from a different point of
view. We will follow the maximum principle approach, which has been
introduced by Pontryagin and his group in the 1950's in order to establish
necessary conditions of optimality for deterministic controlled systems.
Towards the extension to the stochastic controlled systems one difficulty is
that the adjoint equation becomes a linear BSDE, especially for stochastic
PDEs (SPDEs), in which case the respective BSDE can be seen as a backward
SPDE (BSPDE, for short). Several papers are devoted to the study of maximum
principles for SPDEs; see, e.g., \cite{Ben83}, \cite{HuPen90}, \cite%
{OksSulZha14}. Stochastic maximum principle for SPDEs with noise and control
on the boundary was established by Guatteri \cite{Gua11} and Guatteri and
Masiero \cite{GuaMas13}, in the case of an interval in $\mathbb{R}$. Their
treatment, based on semigroup theory, is different from ours; in this paper
we deal with variational solutions for the controlled system, as well as for
the adjoint equation.

The paper is organized as follows. In section 2, we introduce some notations
and recall some preliminary results concerning the well-posedness of the
state equation. Section 3 is devoted to the derivation of necessary and
sufficient optimality conditions in the form of a maximum principle. In
order to achieve this, we use the duality between the adjoint equation and
the variation equation. We will first analyze the adjoint equation, for
which we give an existence theorem based on a result of M\'{a}rquez-Dur\'{a}%
n and Real \cite{MarDur04} concerning BSDEs in a variational framework.
Then, the variation equation is obtained by using a linear perturbation of
the control. In section 4, we prove directly the existence of an optimal
control under the assumption that the coefficient $\gamma$ depends linearly
on the control.

\section{Preliminaries}

Let $\mathcal{O}\subseteq\mathbb{R}^{n}$ be a bounded domain which is
sufficiently regular (see, e.g. \cite{AdaFou03}, Remark 7.45 or \cite%
{DemDem07}). On $\mathcal{O}$ we introduce the standard Sobolev space $H^{1}(%
\mathcal{O})$; on the boundary $\Gamma:=\partial\mathcal{O}$ we consider the
fractional order Sobolev space%
\begin{equation*}
H^{\frac{1}{2}}(\Gamma):=\left\{ \bar{y}\in L^{2}(\Gamma)\mid\int_{\Gamma
}\int_{\Gamma}\frac{\left\vert \bar{y}(\xi)-\bar{y}(\xi^{\prime})\right\vert
^{2}}{\left\vert \xi-\xi^{\prime}\right\vert ^{n}}d\xi
d\xi^{\prime}<+\infty\right\} ,
\end{equation*}
endowed with the norm%
\begin{equation*}
\left\Vert \bar{y}\right\Vert _{H^{\frac{1}{2}}(\Gamma)}^{2}:=\left\Vert
\bar{y}\right\Vert _{L^{2}(\Gamma)}^{2}+\int_{\Gamma}\int_{\Gamma}\frac{%
\left\vert \bar{y}(\xi)-\bar{y}(\xi^{\prime})\right\vert ^{2}}{\left\vert
\xi-\xi^{\prime}\right\vert ^{n}}d\xi d\xi^{\prime},\ \bar{y}\in H^{\frac{1}{%
2}}(\Gamma).
\end{equation*}
The following result of compactness\footnote{%
an operator is \textit{compact} if it maps bounded sets into precompact sets}
of the injection will be useful later:

\begin{equation*}
H^{\frac{1}{2}}(\Gamma)\hookrightarrow L^{2}(\Gamma),\ \text{compactly.}
\end{equation*}
It is well-known that for a smooth domain $\mathcal{O}$, the \emph{trace}
operator $\tau:H^{1}(\mathcal{O})\rightarrow L^{2}(\Gamma)$, with the
property that $\tau(y)=y%
\vert\big.%
_{\Gamma}$, $\forall y\in H^{1}(\mathcal{O})\cap C(\mathcal{\bar{O}})$, is
well-defined. Moreover, the range of $\tau$ is actually $H^{\frac{1}{2}%
}(\Gamma)$ and%
\begin{equation*}
\left\Vert \tau(y)\right\Vert _{H^{\frac{1}{2}}(\Gamma)}\leq K\left\Vert
y\right\Vert _{H^{1}(\mathcal{O})},\ \forall y\in H^{1}(\mathcal{O})
\end{equation*}
for some constant $K$ depending only on $\mathcal{O}$.

In what follows we suppose that the domain $\mathcal{O}$ is bounded and
smooth. We introduce the \textquotedblleft pivot\textquotedblright\ space $%
H:=L^{2}(\mathcal{O})\times L^{2}(\Gamma)$ endowed with the natural inner
product%
\begin{equation*}
\left\langle (y,\bar{y}),(y^{\prime},\bar{y}^{\prime})\right\rangle
_{H}:=\left\langle y,y^{\prime}\right\rangle _{L^{2}(\mathcal{O}%
)}+\left\langle \bar{y},\bar{y}^{\prime}\right\rangle _{L^{2}(\Gamma )},\ (y,%
\bar{y}),(y^{\prime},\bar{y}^{\prime})\in H
\end{equation*}
and norm $\left\Vert \cdot\right\Vert _{H}$. Let us consider the Banach space%
\begin{equation*}
V:=\left\{ (y,\bar{y})\in H^{1}(\mathcal{O})\times H^{\frac{1}{2}%
}(\Gamma)\mid\bar{y}=\tau(y)\right\} ;
\end{equation*}
endowed with the norm%
\begin{equation*}
\left\Vert (y,\bar{y})\right\Vert _{V}:=\left\Vert \nabla y\right\Vert
_{L^{2}(\mathcal{O})}+\left\Vert \bar{y}\right\Vert _{L^{2}(\Gamma)}.
\end{equation*}
The embedding $V\hookrightarrow H$ is compact; this property will be used in
the proof of Theorem \ref{Th: Exist}. Furthermore, the space $V$ is
isomorphic to $H^{1}(\mathcal{O})$ and it is densely embedded in $H$. Let $%
V^{\ast}$ be the dual space of $V$, with the dualization denoted $%
_{V^{\ast}}\left\langle \cdot,\cdot\right\rangle _{V}$. We fix the Gelfand
triple $V\subseteq H\subseteq V^{\ast}$ (the last formal inclusion implies
that $_{V^{\ast}}\left\langle \mathbf{z},\mathbf{y}\right\rangle
_{V}=\left\langle \mathbf{z},\mathbf{y}\right\rangle _{H}$ for every $%
\mathbf{y\in}V$ and $\mathbf{z}\in H$).\medskip

Let $U$ be a convex, closed subset of an Euclidian space $\mathbb{R}^{m}$.
On the coefficients of the equation we impose the following conditions:

\begin{description}
\item[(A$_{0}$)] $\mathbf{a}:\mathcal{O}\times\mathbb{R}^{n}\rightarrow
\mathbb{R}^{n}$ is a Carath\'{e}odory function\footnote{%
i.e., $\mathbf{a}(x,\cdot)$ is continuous for every $x\in\mathcal{O}$ and $%
\mathbf{a}(\cdot,\zeta)$ is measurable for every $\zeta\in\mathbb{R}^{n}$}
with $\mathbf{a}(x,\cdot)\in C^{1}(\mathbb{R}^{n};\mathbb{R}^{n})$, $dx$%
-a.e. on $\mathcal{O}$;

\item there exist constants $\delta,c_{0}>0$ and positive functions $\rho\in
L^{2}(\mathcal{O})$, $\tilde{\rho}\in L^{2}(\Gamma)$ such that:

\item[(A$_{1}$)] for almost all $x\in\mathcal{O}$ and all $\zeta\in \mathbb{R%
}^{n}$:%
\begin{align*}
\left\vert \mathbf{a}(x,\zeta)\right\vert & \leq c_{0}(\rho(x)+\left\vert
\zeta\right\vert ), \\
\left\vert D_{\zeta}\mathbf{a}(x,\zeta)\right\vert & \leq c_{0};
\end{align*}

\item[(A$_{2}$)] for almost all $x\in\mathcal{O}$ and all $\zeta\in \mathbb{R%
}^{n}$:%
\begin{equation*}
\left( \mathbf{a}(x,\zeta)-\mathbf{a}(x,\eta)\right) \cdot(\zeta-\eta
)\geq\delta\left\vert \zeta-\eta\right\vert ^{2};
\end{equation*}

\item[(B)] $b:L^{2}(\mathcal{O})\rightarrow L^{2}(\mathcal{O})$ and $\tilde {%
b}:L^{2}(\Gamma)\rightarrow L^{2}(\Gamma)$ are Hilbert-Schmidt linear
operators;

\item[(C$_{0}$)] $\gamma:\Gamma\times\mathbb{R}\times U\rightarrow\mathbb{R}$
is a Carath\'{e}odory function with $\gamma(\xi,\cdot)\in C^{1}(\mathbb{R}%
\times U)$, $d\xi$-a.e. on $\Gamma$;

\item[(C$_{1}$)] for almost all $\xi\in\Gamma$ and all $\left( \bar {y}%
,u\right) \in\mathbb{R}\times U$:%
\begin{align*}
\left\vert \gamma(\xi,\bar{y},u)\right\vert & \leq c_{0}\left( \tilde{\rho }%
(\xi)+\left\vert \bar{y}\right\vert +\left\vert u\right\vert \right) , \\
\left\vert D_{\bar{y}}\gamma(\xi,\bar{y},u)\right\vert & \leq c_{0}, \\
\left\vert D_{u}\gamma(\xi,\bar{y},u)\right\vert & \leq c_{0}\left( \tilde{%
\rho}(\xi)+\left\vert \bar{y}\right\vert \right) ;
\end{align*}

\item[(C$_{2}$)] for almost all $\xi\in\Gamma$ and all $\left( \bar {y}%
,u\right) \in\mathbb{R}\times U$:%
\begin{equation*}
\left( \gamma(\xi,\bar{y},u)-\gamma(\xi,\bar{y}^{\prime},u)\right) \left(
\bar{y}-\bar{y}^{\prime}\right) \geq\delta\left\vert \bar{y}-\bar{y}^{\prime
}\right\vert ^{2}.
\end{equation*}
\end{description}

In order to give a functional setting for our equation, let $S$ be the space of smooth functions
\begin{equation*}
S:=\left\{ (y,\bar{y})\in C^{\infty
}(\mathcal{O})\times C^{\infty}(\Gamma)\mid\bar{y}=y|_\Gamma\right\}.
\end{equation*}
We define an
operator $A:S\times L^{2}(\Gamma;U)\rightarrow H\subseteq V^{\ast}$ by%
\begin{equation*}
A(y,\bar{y},u):=%
\begin{pmatrix}
\func{div}\mathbf{a}(x,\nabla y) \\
-\mathbf{a}(\xi,\nabla y)\cdot\nu-\gamma(\xi,y(t,\xi),u(t,\xi))%
\end{pmatrix}
,\ (y,\bar{y},u)\in S\times L^{2}(\Gamma;U).
\end{equation*}
An integration by parts, hypotheses (A$_{0}$), (A$_{1}$%
), and the density of $S$ in $V$ show that $A$ can be extended to a bounded
non-linear operator on $V\times L^{2}(\Gamma;U)$ with values in $V^{\ast}$,
again denoted by $A$, such that%
\begin{equation*}
_{V^{\ast}}\left\langle A(y,\bar{y},u),(z,\bar{z})\right\rangle _{V}=-\int_{%
\mathcal{O}}\mathbf{a}(x,\nabla y)\cdot\nabla z\,dx-\int_{\Gamma }\gamma(\xi,%
\bar{y},u)\bar{z}\,d\xi,
\end{equation*}
for all $(y,\bar{y}),(z,\bar{z})\in V$ and $u\in L^{2}(\Gamma;U)$.

We also set $B:=%
\begin{pmatrix}
b & 0 \\
0 & \tilde{b}%
\end{pmatrix}
$, so that $B$ is a Hilbert-Schmidt operator; we denote $L_{2}(H)$ the space
of such operators, endowed with the norm $\left\Vert T\right\Vert
_{L_{2}(H)}:=\left( \sum_{j=1}^{\infty}\left\vert Te_{j}\right\vert
^{2}\right) ^{1/2}$, for an orthonormal basis $\left( e_{j}\right) $ of $H$.
Consider a $H$-cylindrical Wiener process, formally written%
\begin{equation*}
W(t)=%
\begin{pmatrix}
w(t) \\
\tilde{w}(t)%
\end{pmatrix}
:=%
\begin{pmatrix}
\sum_{k=1}^{\infty}\beta_{k}^{1}(t)g_{k}^{1} \\
\sum_{k=1}^{\infty}\beta_{k}^{2}(t)g_{k}^{2}%
\end{pmatrix}
,
\end{equation*}
where $\left\{ g_{k}^{1}\right\} $ and $\left\{ g_{k}^{2}\right\} $ are
orthonormal bases in $L^{2}(\mathcal{O})$ and $L^{2}(\Gamma)$, respectively,
$\left\{ \beta_{k}^{i}\right\} _{k=1,\infty}^{i=1,2}$ is a sequence of
independent Brownian motions on $\left( \Omega,\mathcal{F},\mathbb{P}\right)
$ and $\left\{ \mathcal{F}_{t}\right\} _{t\geq0}$ is the filtration
generated by $\left\{ \beta_{k}^{i}\right\} _{k=1,\infty}^{i=1,2}$,
augmented by the null sets of $\mathcal{F}$.

Then, for $\mathbf{y}_{0}=\left( y_{0},\bar{y}_{0}\right) \in H$, the
state equation (\ref{Eq_state0}) can be written as%
\begin{equation}
\mathbf{Y}(t)=\mathbf{y}_{0}+\int_{0}^{t}A(\mathbf{Y}(s),u(s)) \,
ds+\int_{0}^{t}B \, dW(s),\ t\in\lbrack0,T].   \label{Eq_state}
\end{equation}
\bigskip

Here we assume that $u$ is an \emph{admissible control} (or simply,
control), \textit{i.e.} a progressively measurable process $u\in L^{2}\left(
\Omega\times\lbrack0,T];L^{2}(\Gamma;U)\right) $. We will denote by $%
\mathcal{U}$ the space of all admissible controls.

\begin{theorem}
\label{Th_Ex_state_eq}Under hypotheses (A$_{0}$)--(A$_{2}$), (B), (C$_{0}$%
)--(C$_{2}$), for every control $u$, there exists a unique solution $\mathbf{%
Y}^{u}=(Y^{u},\bar{Y}^{u})\in L^{2}\left( \Omega\times \lbrack0,T];V\right) $
of equation (\ref{Eq_state}) such that $\mathbf{Y}^{u}$ is a continuous,
adapted process with values in $H$. Moreover,%
\begin{equation*}
\mathbb{E}\sup_{t\in\left[ 0,T\right] }\left\Vert \mathbf{Y}%
^{u}(t)\right\Vert _{H}^{2}<+\infty.
\end{equation*}
\end{theorem}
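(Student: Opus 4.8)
The plan is to treat equation \eqref{Eq_state} as a monotone stochastic evolution equation in the Gelfand triple $V\subseteq H\subseteq V^{\ast}$ and invoke the variational well-posedness theory (Pardoux, Krylov--Rozovskii, or the presentation in Prévôt--Röckner). The first step is to verify the four structural hypotheses of that theory for the operator $A(\cdot,u)$ with $u$ fixed: hemicontinuity (i.e. $\lambda\mapsto {}_{V^{\ast}}\langle A(\mathbf{y}+\lambda\mathbf{z},u),\mathbf{v}\rangle_{V}$ is continuous), weak monotonicity, coercivity, and a growth/boundedness bound on $\|A(\mathbf{y},u)\|_{V^{\ast}}$. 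Hemicontinuity follows from (A$_0$), (C$_0$) together with the dominated convergence theorem applied to the explicit bilinear-type formula for ${}_{V^{\ast}}\langle A(y,\bar y,u),(z,\bar z)\rangle_V$. For monotonicity, one computes
\[
{}_{V^{\ast}}\langle A(y,\bar y,u)-A(y',\bar y',u),(y-y',\bar y-\bar y')\rangle_V
=-\int_{\mathcal O}(\mathbf a(x,\nabla y)-\mathbf a(x,\nabla y'))\cdot\nabla(y-y')\,dx
-\int_{\Gamma}(\gamma(\xi,\bar y,u)-\gamma(\xi,\bar y',u))(\bar y-\bar y')\,d\xi,
\]
and applies (A$_2$) and (C$_2$) to bound this above by $-\delta\|\nabla(y-y')\|_{L^2(\mathcal O)}^2-\delta\|\bar y-\bar y'\|_{L^2(\Gamma)}^2=-\delta\|(y,\bar y)-(y',\bar y')\|_V^2$; in particular $A(\cdot,u)$ is \emph{strongly} monotone, which is what will later give uniqueness. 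Coercivity is the choice $\mathbf y'=0$ in the same computation together with (A$_1$), (C$_1$): the leading terms $-\delta\|\nabla y\|^2-\delta\|\bar y\|^2=-\delta\|\mathbf y\|_V^2$ dominate the lower-order contributions $\int\rho|\nabla y|$, $\int(\tilde\rho+|u|)|\bar y|$ after Young's inequality, producing the required bound ${}_{V^{\ast}}\langle A(\mathbf y,u),\mathbf y\rangle_V\le -c_1\|\mathbf y\|_V^2+c_2(\|\mathbf y\|_H^2+\|u\|_{L^2(\Gamma)}^2+\|\rho\|^2+\|\tilde\rho\|^2)$. Boundedness $\|A(\mathbf y,u)\|_{V^{\ast}}\le c(\|\mathbf y\|_V+\|u\|_{L^2(\Gamma;U)}+\|\rho\|+\|\tilde\rho\|)$ is immediate from the explicit formula and the bounds in (A$_1$),(C$_1$), using that $\|\tau(z)\|_{L^2(\Gamma)}\le K\|z\|_{H^1(\mathcal O)}\le K'\|\mathbf z\|_V$. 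The diffusion term is constant in $\mathbf Y$ and lies in $L_2(H)$ by (B), so its hypotheses are trivial.

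With these four properties, and the fact that $u\in\mathcal U$ makes $s\mapsto A(\mathbf Y(s),u(s))$ adapted with the forcing term $u\in L^2(\Omega\times[0,T];L^2(\Gamma;U))$ square-integrable, the abstract variational existence-and-uniqueness theorem yields a unique solution $\mathbf Y^u\in L^2(\Omega\times[0,T];V)\cap L^2(\Omega;C([0,T];H))$ satisfying \eqref{Eq_state}, together with the energy estimate $\mathbb E\sup_{t\in[0,T]}\|\mathbf Y^u(t)\|_H^2<+\infty$. (Uniqueness here is really a consequence of strong monotonicity via Itô's formula and Gronwall's lemma applied to $\|\mathbf Y^u_1(t)-\mathbf Y^u_2(t)\|_H^2$; the stochastic-integral difference vanishes because $B$ does not depend on the state.) The final step is bookkeeping: identify the abstract solution with the pair $(Y^u,\bar Y^u)$ and note that the constraint $\bar Y^u(s)=\tau(Y^u(s))$ is built into membership in $V$, so the boundary process is genuinely the trace of the interior process, and the continuity and adaptedness in $H$ transfer verbatim.

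The main obstacle is the verification of coercivity and boundedness with the correct norm on $V$, because $\|(y,\bar y)\|_V=\|\nabla y\|_{L^2(\mathcal O)}+\|\bar y\|_{L^2(\Gamma)}$ does \emph{not} control $\|y\|_{L^2(\mathcal O)}$ directly — one must use that this norm is equivalent to $\|y\|_{H^1(\mathcal O)}$ (stated in the excerpt: $V$ is isomorphic to $H^1(\mathcal O)$), which in turn rests on a Poincaré-type inequality on $\mathcal O$ combined with the trace term; care is needed so that the constant $\delta$ from (A$_2$),(C$_2$) still wins against the cross terms after this norm equivalence is invoked. A secondary technical point is justifying that the formal integration by parts extends $A$ from $S$ to all of $V$ with the claimed duality formula — but this is asserted in the excerpt just before the theorem, so I would simply cite it. Everything else (the Itô formula in the Gelfand triple, Gronwall, Burkholder--Davis--Gundy for the $\sup$ estimate) is routine once the four structural conditions are in hand.
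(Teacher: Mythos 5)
Your proposal is correct and follows essentially the same route as the paper, which does not prove this theorem directly but refers to the variational existence-and-uniqueness theory of Pr\'ev\^ot and R\"ockner, with the verification of the structural hypotheses (hemicontinuity, monotonicity, coercivity, growth) carried out in the cited work of Bonaccorsi and Ziglio. Your sketch of that verification, including the strong monotonicity from (A$_2$), (C$_2$) and the remark about the equivalence of $\left\Vert\cdot\right\Vert_{V}$ with the $H^{1}(\mathcal{O})$ norm, is exactly the content the paper delegates to those references.
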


For the proof of this result the reader can refer to the book of Pr\'{e}v%
\^{o}t and R\"{o}ckner \cite{PreRoc05}, where a general result of existence
and uniqueness for variational solutions was given. The task of verifying
that the above hypotheses are sufficient to place ourselves into their
framework was already carried in \cite{BonZig14}.

The notion of solution for \eqref{Eq_state} that is used in Theorem \ref%
{Th_Ex_state_eq} is that of \emph{variational solution} as given in the book
by Pr\'{e}v\^{o}t and R\"{o}ckner \cite[Definition 4.2.1]{PreRoc05}.
Actually, this means that $X$ is an $H$-valued, adapted process with an
equivalent version that belongs to $L^2\left( \Omega\times
\lbrack0,T];V\right)$ and satisfies the equation $\mathbb{P}$-a.s.

Concerning the cost functional (\ref{Cost_functional0}), on its coefficients
we impose the following hypotheses (the functions $\rho$ and $\tilde{\rho}$
were already introduced for the previous set of conditions):

\begin{description}
\item[(F$_{0}$)] $\psi:\mathcal{O}\times\mathbb{R}$ and $\bar{\psi}%
:\Gamma\times\mathbb{R}$ are Carath\'{e}odory functions with $%
\psi(x,\cdot)\in C^{1}(\mathbb{R})$, $dx$-a.e. on $\mathcal{O}$ and $\bar{%
\psi}(\xi,\cdot)\in C^{1}(\mathbb{R})$, $d\xi$-a.e. on $\Gamma$;

\item there exist constants $c_{1},c_{2}>0$ such that:

\item[(F$_{1}$)] for almost all $x\in\mathcal{O}$ and all $y\in\mathbb{R}$:%
\begin{align*}
\left\vert \psi(x,y)\right\vert & \leq c_{1}(\rho(x)^{2}+\left\vert
y\right\vert ^{2}), \\
\left\vert D_{y}\psi(x,y)\right\vert & \leq c_{1}(\rho(x)+\left\vert
y\right\vert );
\end{align*}
for almost all $\xi\in\Gamma$ and all $\bar{y}\in\mathbb{R}$:%
\begin{align*}
\left\vert \bar{\psi}(\xi,\bar{y})\right\vert & \leq c_{1}(\tilde{\rho}%
(\xi)^{2}+\left\vert \bar{y}\right\vert ^{2}), \\
\left\vert D_{\bar{y}}\bar{\psi}(\xi,\bar{y})\right\vert & \leq c_{1}(\tilde{%
\rho}(\xi)+\left\vert \bar{y}\right\vert );
\end{align*}

\item[(L$_{0}$)] $\ell:\mathcal{O}\times\mathbb{R}$ and $\bar{\ell}%
:\Gamma\times\mathbb{R}\times U$ are Carath\'{e}odory functions with $%
\ell(x,\cdot)\in C^{1}(\mathbb{R})$, $dx$-a.e. on $\mathcal{O}$ and $\bar{%
\ell}(\xi,\cdot,\cdot)\in C^{1}(\mathbb{R}\times U)$, $d\xi$-a.e. on $\Gamma$%
;

\item[(L$_{1}$)] for almost all $x\in\mathcal{O}$ and all $y\in\mathbb{R}$:%
\begin{align*}
\left\vert \ell(x,y)\right\vert & \leq c_{2}(\rho(x)^{2}+\left\vert
y\right\vert ^{2}), \\
\left\vert D_{y}\ell(x,y)\right\vert & \leq c_{2}(\rho(x)+\left\vert
y\right\vert ),
\end{align*}
for almost all $\xi\in\Gamma$ and all $(\bar{y},u)\in\mathbb{R}\times U$:%
\begin{align*}
\left\vert \bar{\ell}(\xi,\bar{y},u)\right\vert & \leq c_{2}(\tilde{\rho }%
(\xi)^{2}+\left\vert \bar{y}\right\vert ^{2}+\left\vert u\right\vert ^{2}),
\\
\left\vert D_{\bar{y}}\bar{\ell}(\xi,\bar{y},u)\right\vert & \leq c_{2}(%
\tilde{\rho}(\xi)+\left\vert \bar{y}\right\vert +\left\vert u\right\vert ),
\\
\left\vert D_{u}\bar{\ell}(\xi,\bar{y},u)\right\vert & \leq c_{2}(\tilde{\rho%
}(\xi)+\left\vert \bar{y}\right\vert +\left\vert u\right\vert ).
\end{align*}
\end{description}

The cost functional can then be written as%
\begin{equation}
J(u):=\mathbb{E}\left[ \int_{0}^{T}L(\mathbf{Y}^{u}(t),u(t))dt+\Psi (\mathbf{%
Y}^{u}(T))\right] ,   \label{Cost_functional}
\end{equation}
where $L:H\times L^{2}(\Gamma;U)\rightarrow\mathbb{R}$ and $\Psi
:H\rightarrow\mathbb{R}$ are defined by%
\begin{align*}
L(y,\bar{y},u) & :=\int_{\mathcal{O}}\ell(x,y(x))dx+\int_{\Gamma}\bar{\ell }%
(\xi,\bar{y}(\xi),u(\xi))d\xi; \\
\Psi(y,\bar{y}) & :=\int_{\mathcal{O}}\psi(x,y(x))dx+\int_{\Gamma}\bar{\psi }%
(\xi,\bar{y}(\xi))d\xi.
\end{align*}

From now on, we will assume that conditions (A$_{0}$)--(A$_{2}$), (B), (C$%
_{0}$)--(C$_{2}$), (F$_{0}$), (F$_{1}$), (L$_{0}$) and (L$_{1}$) are in
force.

It is easy to show that $\Psi$ and $L$ are G\^{a}teaux differentiable in $%
\mathbf{y}=(y,\bar{y})\in H$, with%
\begin{align*}
D_{\mathbf{y}}\Psi(\mathbf{y}) & =\left( D_{y}\psi(\cdot,y(\cdot )),D_{\bar{y%
}}\bar{\psi}(\cdot,\bar{y}(\cdot))\right) ; \\
D_{\mathbf{y}}L(\mathbf{y},u) & =\left( D_{y}\ell(\cdot,y(\cdot)),D_{\bar {y}%
}\bar{\ell}(\cdot,\bar{y}(\cdot),u(\cdot))\right) .
\end{align*}
Also, $A$ is G\^{a}teaux differentiable in $\mathbf{y}=(y,\bar{y})\in V$,
with%
\begin{equation*}
_{V^{\ast}}\left\langle \left( D_{\mathbf{y}}A\right) (\mathbf{y},u)(p,\bar{p%
}),(z,\bar{z})\right\rangle _{V}=-\int_{\mathcal{O}}D_{\zeta }\mathbf{a}%
(x,\nabla y)\nabla p\cdot\nabla z\,dx-\int_{\Gamma}D_{\bar{y}}\gamma(\xi,%
\bar{y},u)\bar{p}\bar{z}\,d\xi,\ (p,\bar{p}),(z,\bar{z})\in V.
\end{equation*}

\section{Maximum principle}

\subsection{The adjoint equation}

We consider the following linear BSDE in $V^{\ast}$:%
\begin{equation}
\mathbf{P}^{u}(t)=D_{\mathbf{y}}\Psi(\mathbf{Y}^{u}(T))+\int_{t}^{T}\left(
D_{\mathbf{y}}A\right) ^{\ast}(\mathbf{Y}^{u}(s),u(s))\mathbf{P}^{u}(s) \,
ds+\int_{t}^{T}D_{\mathbf{y}}L(\mathbf{Y}^{u}(s),u(s)) \,
ds-\int_{t}^{T}Q^{u}(s) \, dW(s).   \label{Eq_Adjoint}
\end{equation}

\begin{theorem}
For every control $u$, there exists a unique solution $(\mathbf{P}%
^{u},Q^{u})=(P^{u},\bar{P}^{u},Q^{u})\in L^{2}\left( \Omega\times\lbrack
0,T];V\right) \times L^{2}\left( \Omega\times\lbrack0,T];L_{2}(H)\right) $
such that $\mathbf{P}^{u}$ is a continuous, adapted process with values in $H
$.
\end{theorem}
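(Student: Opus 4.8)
The plan is to read \eqref{Eq_Adjoint} as a \emph{linear} backward stochastic evolution equation in the Gelfand triple $V\subseteq H\subseteq V^{\ast}$ and to invoke the existence-and-uniqueness theorem for BSDEs in a variational framework of M\'{a}rquez-Dur\'{a}n and Real \cite{MarDur04}. Concretely, one has to verify: (i) the terminal datum $D_{\mathbf{y}}\Psi(\mathbf{Y}^{u}(T))$ lies in $L^{2}(\Omega,\mathcal{F}_{T},\mathbb{P};H)$; (ii) the forcing term $s\mapsto D_{\mathbf{y}}L(\mathbf{Y}^{u}(s),u(s))$ lies in $L^{2}(\Omega\times[0,T];H)$, hence a fortiori in $L^{2}(\Omega\times[0,T];V^{\ast})$; and (iii) the random family of linear operators $\mathcal{L}(s):=(D_{\mathbf{y}}A)^{\ast}(\mathbf{Y}^{u}(s),u(s))$ is progressively measurable with values in $\mathcal{L}(V,V^{\ast})$ and satisfies, uniformly in $(s,\omega)$, the boundedness and coercivity estimates demanded by the variational framework (hemicontinuity and the Lipschitz property in the unknown being automatic, the operator being linear and bounded).

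For (i) and (ii): using the explicit formulas for $D_{\mathbf{y}}\Psi$ and $D_{\mathbf{y}}L$ recorded above, the growth bounds $|D_{y}\psi(x,y)|\le c_{1}(\rho(x)+|y|)$ and $|D_{\bar{y}}\bar{\psi}(\xi,\bar{y})|\le c_{1}(\tilde{\rho}(\xi)+|\bar{y}|)$ from (F$_{1}$), the bounds $|D_{y}\ell(x,y)|\le c_{2}(\rho(x)+|y|)$ and $|D_{\bar{y}}\bar{\ell}(\xi,\bar{y},u)|\le c_{2}(\tilde{\rho}(\xi)+|\bar{y}|+|u|)$ from (L$_{1}$), together with $\rho\in L^{2}(\mathcal{O})$, $\tilde{\rho}\in L^{2}(\Gamma)$, the moment estimate $\mathbb{E}\sup_{t\in[0,T]}\|\mathbf{Y}^{u}(t)\|_{H}^{2}<+\infty$ from Theorem \ref{Th_Ex_state_eq}, and $u\in\mathcal{U}$, elementary estimates give the required square-integrability; measurability and adaptedness follow by composing the Carath\'{e}odory maps with the progressively measurable processes $\mathbf{Y}^{u}$ and $u$.

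For (iii): the bilinear form $(\mathbf{p},\mathbf{z})\mapsto-\int_{\mathcal{O}}D_{\zeta}\mathbf{a}(x,\nabla y)\nabla p\cdot\nabla z\,dx-\int_{\Gamma}D_{\bar{y}}\gamma(\xi,\bar{y},u)\bar{p}\bar{z}\,d\xi$ is, by $|D_{\zeta}\mathbf{a}|\le c_{0}$ (from (A$_{1}$)) and $|D_{\bar{y}}\gamma|\le c_{0}$ (from (C$_{1}$)), continuous on $V\times V$ with a bound depending only on $c_{0}$; hence it determines bounded operators on $V$ in each argument, so that $(D_{\mathbf{y}}A)^{\ast}(\mathbf{y},u)\in\mathcal{L}(V,V^{\ast})$ with norm $\le c_{0}$ uniformly in $(\mathbf{y},u)$. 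Differentiating the monotonicity assumptions (A$_{2}$) and (C$_{2}$) (let $\eta\to\zeta$, resp.\ $\bar{y}'\to\bar{y}$) yields $D_{\zeta}\mathbf{a}(x,\zeta)\xi\cdot\xi\ge\delta|\xi|^{2}$ for all $\xi\in\mathbb{R}^{n}$ and $D_{\bar{y}}\gamma(\xi,\bar{y},u)\ge\delta$; since ${}_{V^{\ast}}\langle(D_{\mathbf{y}}A)^{\ast}(\mathbf{y},u)\mathbf{p},\mathbf{p}\rangle_{V}={}_{V^{\ast}}\langle(D_{\mathbf{y}}A)(\mathbf{y},u)\mathbf{p},\mathbf{p}\rangle_{V}$ equals the value of the above form at $(\mathbf{p},\mathbf{p})$, we get ${}_{V^{\ast}}\langle(D_{\mathbf{y}}A)^{\ast}(\mathbf{y},u)\mathbf{p},\mathbf{p}\rangle_{V}\le-\delta(\|\nabla p\|_{L^{2}(\mathcal{O})}^{2}+\|\bar{p}\|_{L^{2}(\Gamma)}^{2})$, which by the equivalence of norms on $V$ gives the coercivity ${}_{V^{\ast}}\langle(D_{\mathbf{y}}A)^{\ast}(\mathbf{y},u)\mathbf{p},\mathbf{p}\rangle_{V}\le-c\|\mathbf{p}\|_{V}^{2}$ for a constant $c>0$ depending only on $\delta$, again uniformly. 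Progressive measurability of $s\mapsto\mathcal{L}(s)$ follows from that of $\mathbf{Y}^{u}$ and $u$ and the continuity of $D_{\zeta}\mathbf{a}(x,\cdot)$ and $D_{\bar{y}}\gamma(\xi,\cdot)$ granted by (A$_{0}$), (C$_{0}$); note that $|D_{\zeta}\mathbf{a}|\le c_{0}$ makes $x\mapsto D_{\zeta}\mathbf{a}(x,\nabla Y^{u}(s,\omega)(x))$ a bounded measurable matrix field for a.e.\ $(s,\omega)$, so the $\mathbf{Y}^{u}$-dependence causes no integrability problem.

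With (i)--(iii) established, \cite{MarDur04} yields a unique pair $(\mathbf{P}^{u},Q^{u})\in L^{2}(\Omega\times[0,T];V)\times L^{2}(\Omega\times[0,T];L_{2}(H))$ solving \eqref{Eq_Adjoint}, with $\mathbf{P}^{u}$ possessing an $H$-continuous adapted modification; uniqueness in this class is part of that statement, and can in any case be recovered by applying the It\^{o} formula in the triple $V\subseteq H\subseteq V^{\ast}$ to $\|\mathbf{P}^{u}(t)\|_{H}^{2}$ for the difference of two solutions and using the coercivity together with Gronwall's lemma. I expect the only genuinely delicate point to be matching hypotheses (A$_{0}$)--(A$_{2}$), (C$_{0}$)--(C$_{2}$) with the precise structural assumptions (and sign conventions) under which \cite{MarDur04} is stated --- in particular, checking that the \emph{adjoint} operator, whose distributed part involves the transpose of the matrix field $D_{\zeta}\mathbf{a}(x,\nabla Y^{u}(s))$, inherits the same coercivity behaviour, and accommodating the fact that the coefficients depend, in a random and time-dependent way, on $\mathbf{Y}^{u}$; the remaining verifications are routine estimates.
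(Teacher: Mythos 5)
Your proposal is correct and follows essentially the same route as the paper: both reduce the adjoint equation to the variational BSDE framework of M\'{a}rquez-Dur\'{a}n and Real and verify their hypotheses (square-integrable terminal datum and forcing via (F$_{1}$), (L$_{1}$) and the moment bound on $\mathbf{Y}^{u}$; progressive measurability; hemicontinuity; boundedness by $c_{0}$ from (A$_{1}$), (C$_{1}$); monotonicity/coercivity from the differentiated forms of (A$_{2}$), (C$_{2}$)). Your explicit remarks that the quadratic form is insensitive to transposing $D_{\zeta}\mathbf{a}$ and that coercivity follows from $D_{\zeta}\mathbf{a}(x,\zeta)\xi\cdot\xi\geq\delta|\xi|^{2}$, $D_{\bar{y}}\gamma\geq\delta$ are exactly the points the paper uses implicitly.
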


\begin{proof}
In order to prove this theorem, we will use a result of M\'{a}rquez-Dur\'{a}%
n and Real \cite{MarDur04} which asserts existence and uniqueness for
general (non-linear) BSDEs in a variational setting. Let us now verify that
the hypotheses of Theorem 2.2 in \cite{MarDur04} are fulfilled for the
coefficients of our BSDE.

\begin{enumerate}
\item \emph{Final condition.} The fact that $D_{\mathbf{y}}\Psi(\mathbf{Y}%
^{u}(T))\in L^{2}(\Omega,\mathcal{F}_{T},P;H)$ is clearly implied by linear
growth condition on $D_{y}\psi$ and $D_{\bar{y}}\bar{\psi}$.

\item \emph{Measurability.} Of course,%
\begin{equation*}
\left( D_{\mathbf{y}}A\right) ^{\ast}(\mathbf{Y}^{u},u)\mathbf{p}+D_{\mathbf{%
y}}L(\mathbf{Y}^{u},u)
\end{equation*}
is a progressively measurable process with values in $V^{\ast}$ for every $(%
\mathbf{p},q)\in V\times L_{2}(H)$.

\item \emph{Hemicontinuity.} The mapping%
\begin{equation*}
\lambda\longmapsto_{V^{\ast}}\left\langle \left( D_{\mathbf{y}}A\right)
^{\ast}(\mathbf{Y}^{u}(t),u(t))(\mathbf{p}+\lambda\mathbf{p}^{\prime }),%
\mathbf{z}\right\rangle _{V}
\end{equation*}
is continuous, for every $(t,\mathbf{p},\mathbf{p}^{\prime})\in\lbrack
0,T]\times V\times V$ and $\mathbf{z}\in V$. Indeed, for $\mathbf{p}=(p,\bar{%
p}),\ \mathbf{p}^{\prime}=(p^{\prime},\bar{p}^{\prime})$ and $\mathbf{z}=(z,%
\bar{z})$, we have
\begin{align*}
_{V^{\ast}}\left\langle \left( D_{\mathbf{y}}A\right) ^{\ast}(\mathbf{Y}%
^{u}(t),u(t))(\mathbf{p}+\lambda\mathbf{p}^{\prime}),\mathbf{z}\right\rangle
_{V} & =-\int_{\mathcal{O}}D_{\zeta}\mathbf{a}(x,\nabla Y^{u}(t))\nabla
z\cdot\left( \nabla p+\lambda\nabla p^{\prime}\right) dx \\
& -\int_{\Gamma}D_{\bar{y}}\gamma(\xi,\bar{Y}^{u}(t),u(t))\bar{z}\left( \bar{%
p}+\lambda\bar{p}^{\prime}\right) d\xi
\end{align*}
and the conclusion follows from the Lebesgue's dominated convergence
theorem, by (A$_{1}$) and (C$_{1}$).

\item \emph{Boundedness.} By (L$_{1}$), $D_{\mathbf{y}}L(\mathbf{Y}%
^{u}(\cdot),u(\cdot))\in L^{2}\left( \Omega\times\lbrack0,T];H\right) $.
Moreover, by (A$_{1}$) and (C$_{1}$), for every $(t,\mathbf{p},q)\in
\lbrack0,T]\times V\times L_{2}(H)$, $\left\Vert \left( D_{\mathbf{y}%
}A\right) ^{\ast}(\mathbf{Y}^{u}(t),u(t))\mathbf{p}\right\Vert _{V^{\ast}}$
is bounded by $c_{0}$.

\item \emph{Monotonicity.} We have that%
\begin{equation*}
_{V^{\ast}}\left\langle \left( D_{\mathbf{y}}A\right) ^{\ast}(\mathbf{Y}%
^{u}(t),u(t))\mathbf{p},\mathbf{p}\right\rangle _{V}=-\int_{\mathcal{O}%
}D_{\zeta}\mathbf{a}(x,\nabla Y^{u})\nabla p\cdot\nabla p\,dx-\int_{\Gamma
}D_{\bar{y}}\gamma(\xi,\bar{Y}^{u},u(t))\left\vert \bar{p}\right\vert
^{2}\,d\xi\leq0,
\end{equation*}
for every $(\mathbf{p},q)=(p,\bar{p},q)\in V\times L_{2}(H)$, $d\mathbb{P}%
\times dt$ a.e., by assumptions (A$_{2}$) and (C$_{2}$).

\item \emph{Coercivity.} There exist $\alpha>0$, $\lambda\in\mathbb{R}$ and
a progressively measurable process $C(\cdot)\in L^{1}\left( \Omega\times
\lbrack0,T]\right) $ such that%
\begin{equation*}
-\ _{V^{\ast}}\left\langle \left( D_{\mathbf{y}}A\right) ^{\ast}(\mathbf{Y}%
^{u}(t),u(t))\mathbf{p},\mathbf{p}\right\rangle _{V}-\left\langle D_{\mathbf{%
y}}L(\mathbf{Y}^{u}(t),u(t)),\mathbf{p}\right\rangle _{H}+\lambda\left\Vert
\mathbf{p}\right\Vert _{H}^{2}+C(t)\geq\alpha\left\Vert \mathbf{p}%
\right\Vert _{V}^{2},
\end{equation*}
for every $(\mathbf{p},q)\in V\times L_{2}(H)$, $d\mathbb{P}\times dt$ a.e.
Indeed, for $\mathbf{p}=(p,\bar{p})$, we have%
\begin{align*}
-\ _{V^{\ast}}\left\langle \left( D_{\mathbf{y}}A\right) ^{\ast}(\mathbf{Y}%
^{u}(t),u(t))\mathbf{p},\mathbf{p}\right\rangle _{V} & -\left\langle D_{%
\mathbf{y}}L(\mathbf{Y}^{u}(t),u(t)),\mathbf{p}\right\rangle _{H}=\int_{%
\mathcal{O}}D_{\zeta}\mathbf{a}(x,\nabla Y^{u}(t))\nabla p\cdot\nabla p\,dx
\\
& +\int_{\Gamma}D_{\bar{y}}\gamma(\xi,\bar{Y}^{u}(t),u(t))\left\vert \bar {p}%
\right\vert ^{2}d\xi-\left\langle D_{\mathbf{y}}L(\mathbf{Y}^{u}(t),u(t)),%
\mathbf{p}\right\rangle _{H} \\
\geq & \delta\left( \left\Vert \nabla p\right\Vert _{L^{2}\left( \mathcal{O}%
\right) }^{2}+\left\Vert \bar{p}\right\Vert _{L^{2}\left( \Gamma\right)
}^{2}\right) -\frac{1}{2}\left\Vert D_{\mathbf{y}}L(\mathbf{Y}%
^{u}(t),u(t))\right\Vert _{H}^{2}-\frac{1}{2}\left\Vert \mathbf{p}%
\right\Vert _{H}^{2} \\
\geq & \delta\left\Vert \mathbf{p}\right\Vert _{V}^{2}-\frac{1}{2}\left\Vert
\mathbf{p}\right\Vert _{H}^{2}-\frac{3}{2}c_{2}^{2}\left( \left\Vert (\rho,%
\tilde{\rho})\right\Vert _{H}^{2}+\left\Vert \mathbf{Y}^{u}(t)\right\Vert
_{H}^{2}+\left\Vert u(t)\right\Vert _{L^{2}\left( \Gamma\right) }^{2}\right)
.
\end{align*}
\end{enumerate}
\end{proof}

\subsection{The variational equation}

We define the operator $\mathcal{G}:L^{2}\left( \Gamma\right) \times
L^{2}(\Gamma;U)\times L^{\infty}(\Gamma;U)\rightarrow H$ by%
\begin{equation*}
\mathcal{G}(\bar{y},u,\bar{u}):=\left( 0,-D_{u}\gamma(\cdot,\bar{y},u)\cdot%
\bar{u}\right) .
\end{equation*}

Let now $u$ and $v$ be two controls such that $v-u$ is bounded; let, for $%
\theta\in\lbrack0,1]$, $u^{\theta}:=(1-\theta)u+\theta v$. Let us denote,
for simplicity, $\mathbf{Y}^{\theta}$, $Y^{\theta}$ and $\bar{Y}^{\theta}$
instead of $\mathbf{Y}^{u^{\theta}}$, $Y^{u^{\theta}}$ and $\bar{Y}%
^{u^{\theta}}$, respectively.

\begin{proposition}
\label{P_var_eq}The equation%
\begin{equation}
\mathbf{Z}(t)=\int_{0}^{t}D_{\mathbf{y}}A(\mathbf{Y}^{u}(s),u(s))\mathbf{Z}%
(s)ds+\int_{0}^{t}\mathcal{G}(\bar{Y}^{u}(s),u(s),v(s)-u(s))ds,\ t\in\left[
0,T\right]   \label{Eq_Var}
\end{equation}
has a unique variational solution $\mathbf{Z}$ that is a continuous, adapted
process in $H$ with $\mathbf{Z}\in L^{2}\left( \Omega
\times\lbrack0,T];V\right) $. Moreover, $\frac{1}{\theta}\left( \mathbf{Y}%
^{\theta}-\mathbf{Y}^{0}\right) $ and $\frac{1}{\theta}\left( \mathbf{Y}%
^{\theta}(T)-\mathbf{Y}^{0}(T)\right) $ converge weakly\footnote{%
Recall that a sequence $\left( Z^{\theta}\right) $ of random variables
taking values in a Hilbert space $X$ converges weakly to $Z$ in $%
L^{2}(\Omega,X)$ as $\theta\rightarrow0$ if for any random variable $\bar{Z}\in L^{2}(\Omega,X)$
we have $\mathbb{E}\left\langle Z^{\theta},\bar{Z}\right\rangle \rightarrow
\mathbb{E}\left\langle Z,\bar{Z}\right\rangle $.} as $\theta\rightarrow0$ to $%
\mathbf{Z}$ and $\mathbf{Z}(T)$ in $L^{2}\left( \Omega\times\lbrack
0,T];V\right) $, respectively in $L^{2}\left( \Omega;H\right) $.
\end{proposition}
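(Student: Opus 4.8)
The plan is to proceed in three stages: (i) existence and uniqueness for the linear equation~\eqref{Eq_Var}; (ii) a uniform energy estimate for the difference quotients $\mathbf{Z}^{\theta}:=\theta^{-1}(\mathbf{Y}^{\theta}-\mathbf{Y}^{0})$; (iii) identification of the weak limit of $(\mathbf{Z}^{\theta})$ with the solution $\mathbf{Z}$ of~\eqref{Eq_Var}. (Note that $u^{0}=u$, so $\mathbf{Y}^{0}=\mathbf{Y}^{u}$.) For stage (i) I would observe that~\eqref{Eq_Var} is an affine evolution equation in the Gelfand triple $V\subseteq H\subseteq V^{\ast}$, with zero initial datum, driven by the progressively measurable family of \emph{linear} operators $\mathbf{p}\mapsto D_{\mathbf{y}}A(\mathbf{Y}^{0}(s),u(s))\mathbf{p}$ and by the forcing $s\mapsto\mathcal{G}(\bar{Y}^{0}(s),u(s),v(s)-u(s))$. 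Boundedness (by $c_{0}\Vert\mathbf{p}\Vert_{V}$) and hemicontinuity are clear from the explicit form of $D_{\mathbf{y}}A$ and from (A$_{1}$), (C$_{1}$), while monotonicity and coercivity follow from (A$_{2}$), (C$_{2}$) exactly as in the proof of the previous theorem, since
\[
{}_{V^{\ast}}\left\langle D_{\mathbf{y}}A(\mathbf{Y}^{0}(s),u(s))\mathbf{p},\mathbf{p}\right\rangle _{V}=-\int_{\mathcal{O}}D_{\zeta}\mathbf{a}(x,\nabla Y^{0})\nabla p\cdot\nabla p\,dx-\int_{\Gamma}D_{\bar{y}}\gamma(\xi,\bar{Y}^{0},u)|\bar{p}|^{2}\,d\xi\leq-\delta\Vert\mathbf{p}\Vert_{V}^{2}\leq0 .
\]
Finally, by (C$_{1}$), $|D_{u}\gamma(\xi,\bar{Y}^{0},u)\cdot(v-u)|\leq c_{0}(\tilde{\rho}(\xi)+|\bar{Y}^{0}(\xi)|)|v(\xi)-u(\xi)|$, which is square integrable since $v-u$ is bounded and $\bar{Y}^{0}\in L^{2}(\Omega\times[0,T];L^{2}(\Gamma))$ by Theorem~\ref{Th_Ex_state_eq}; hence $\mathcal{G}(\bar{Y}^{0},u,v-u)\in L^{2}(\Omega\times[0,T];H)$, and the variational theory of \cite{PreRoc05} (which admits a vanishing noise coefficient) provides a unique variational solution $\mathbf{Z}$ with the stated regularity.

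For stage (ii), since $\mathbf{Y}^{\theta}$ and $\mathbf{Y}^{0}$ start at the common datum $\mathbf{y}_{0}$ and are driven by the same noise, the stochastic integrals cancel, so $\theta\mathbf{Z}^{\theta}(t)=\int_{0}^{t}\bigl(A(\mathbf{Y}^{\theta}(s),u^{\theta}(s))-A(\mathbf{Y}^{0}(s),u(s))\bigr)\,ds$ is pathwise absolutely continuous in $V^{\ast}$ with derivative in $L^{2}(0,T;V^{\ast})$, and the chain rule in the Gelfand triple for $\Vert\theta\mathbf{Z}^{\theta}(t)\Vert_{H}^{2}$ applies with no It\^{o} correction. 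Splitting $A(\mathbf{Y}^{\theta},u^{\theta})-A(\mathbf{Y}^{0},u)=[A(\mathbf{Y}^{\theta},u^{\theta})-A(\mathbf{Y}^{0},u^{\theta})]+[A(\mathbf{Y}^{0},u^{\theta})-A(\mathbf{Y}^{0},u)]$, I would bound the first bracket paired with $\mathbf{Y}^{\theta}-\mathbf{Y}^{0}$ by the strong monotonicity of $A$ in $\mathbf{y}$ (which produces $-\delta\Vert\mathbf{Z}^{\theta}\Vert_{V}^{2}$) and the second by $|\gamma(\xi,\bar{Y}^{0},u^{\theta})-\gamma(\xi,\bar{Y}^{0},u)|\leq\theta c_{0}(\tilde{\rho}(\xi)+|\bar{Y}^{0}(\xi)|)|v(\xi)-u(\xi)|$ together with Young's inequality; after dividing by $\theta^{2}$ and taking $\sup_{t}$ and expectation, this should give $\mathbb{E}\sup_{t\in[0,T]}\Vert\mathbf{Z}^{\theta}(t)\Vert_{H}^{2}+\mathbb{E}\int_{0}^{T}\Vert\mathbf{Z}^{\theta}(t)\Vert_{V}^{2}\,dt\leq C$ with $C$ independent of $\theta\in(0,1]$ (using again that $v-u$ is bounded and $\bar{Y}^{0}\in L^{2}$). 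In particular $\mathbf{Y}^{\theta}=\mathbf{Y}^{0}+\theta\mathbf{Z}^{\theta}\to\mathbf{Y}^{0}$ strongly in $L^{2}(\Omega\times[0,T];V)$, at rate $\theta$.

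For stage (iii), fix a sequence $\theta_{n}\downarrow0$. By the uniform bound and weak compactness, along a subsequence (not relabelled) $\mathbf{Z}^{\theta_{n}}\rightharpoonup\widetilde{\mathbf{Z}}$ in $L^{2}(\Omega\times[0,T];V)$ and $\mathbf{Z}^{\theta_{n}}(T)\rightharpoonup\widetilde{\mathbf{Z}}_{T}$ in $L^{2}(\Omega;H)$; refining once more, I may also assume $\theta_{n}\nabla Z^{\theta_{n}}\to0$, $\theta_{n}\bar{Z}^{\theta_{n}}\to0$ and $u^{\theta_{n}}\to u$ a.e. Writing $\mathbf{Z}^{\theta_{n}}(t)=\int_{0}^{t}F^{\theta_{n}}(s)\,ds$ in $V^{\ast}$ with $F^{\theta_{n}}:=\theta_{n}^{-1}(A(\mathbf{Y}^{\theta_{n}},u^{\theta_{n}})-A(\mathbf{Y}^{0},u))\in L^{2}(\Omega\times[0,T];V^{\ast})$, and using the $C^{1}$-hypotheses on $\mathbf{a},\gamma$ to express $\theta_{n}^{-1}(\mathbf{a}(x,\nabla Y^{\theta_{n}})-\mathbf{a}(x,\nabla Y^{0}))=a^{\theta_{n}}\nabla Z^{\theta_{n}}$ (and similarly for $\gamma$), where $a^{\theta_{n}}:=\int_{0}^{1}D_{\zeta}\mathbf{a}(\cdot,\nabla Y^{0}+r\theta_{n}\nabla Z^{\theta_{n}})\,dr$ and $g^{\theta_{n}}:=\int_{0}^{1}D_{\bar{y}}\gamma(\cdot,\bar{Y}^{0}+r\theta_{n}\bar{Z}^{\theta_{n}},u^{\theta_{n}})\,dr$ are bounded by $c_{0}$ and, by dominated convergence, converge to $D_{\zeta}\mathbf{a}(\cdot,\nabla Y^{0})$, respectively $D_{\bar{y}}\gamma(\cdot,\bar{Y}^{0},u)$, in every $L^{q}$ with $q<\infty$, I would pass to the limit term by term: the $\mathcal{G}$-contribution converges \emph{strongly} in $L^{2}(\Omega\times[0,T];H)$, while the products $a^{\theta_{n}}\nabla Z^{\theta_{n}}$ and $g^{\theta_{n}}\bar{Z}^{\theta_{n}}$ converge weakly in $L^{2}$ because a uniformly bounded, strongly convergent factor times a weakly convergent one converges weakly; this gives $F^{\theta_{n}}\rightharpoonup\widetilde{F}:=D_{\mathbf{y}}A(\mathbf{Y}^{0},u)\widetilde{\mathbf{Z}}+\mathcal{G}(\bar{Y}^{0},u,v-u)$ weakly in $L^{2}(\Omega\times[0,T];V^{\ast})$. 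Testing $\mathbf{Z}^{\theta_{n}}(t)=\int_{0}^{t}F^{\theta_{n}}(s)\,ds$ and $\mathbf{Z}^{\theta_{n}}(T)=\int_{0}^{T}F^{\theta_{n}}(s)\,ds$ against processes of the form $\eta(t)\xi\zeta$ ($\eta\in L^{\infty}(0,T)$, $\xi\in L^{\infty}(\Omega)$, $\zeta\in V$) and applying Fubini, I conclude $\widetilde{\mathbf{Z}}(t)=\int_{0}^{t}\widetilde{F}(s)\,ds$ in $V^{\ast}$ and $\widetilde{\mathbf{Z}}_{T}=\widetilde{\mathbf{Z}}(T)$; since $\widetilde{\mathbf{Z}}$ is progressively measurable (a norm-closed subspace is weakly closed) and $\widetilde{F}\in L^{2}(\Omega\times[0,T];V^{\ast})$, $\widetilde{\mathbf{Z}}$ is a variational solution of~\eqref{Eq_Var}, hence $\widetilde{\mathbf{Z}}=\mathbf{Z}$ by the uniqueness of stage (i) and $\widetilde{\mathbf{Z}}_{T}=\mathbf{Z}(T)$. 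Since the limits do not depend on the chosen subsequence, $\mathbf{Z}^{\theta}\rightharpoonup\mathbf{Z}$ in $L^{2}(\Omega\times[0,T];V)$ and $\mathbf{Z}^{\theta}(T)\rightharpoonup\mathbf{Z}(T)$ in $L^{2}(\Omega;H)$ as $\theta\to0$, which is the assertion.

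The step I expect to be the main obstacle is the identification of the weak limit in stage (iii): a priori both factors of $a^{\theta}\nabla Z^{\theta}$ (and of $g^{\theta}\bar{Z}^{\theta}$) converge only weakly, so the product need not converge to the product of the limits. What rescues the argument is precisely the \emph{uniform} estimate of stage (ii), which forces $\mathbf{Y}^{\theta}\to\mathbf{Y}^{0}$ \emph{strongly} in $L^{2}(\Omega\times[0,T];V)$ and hence $a^{\theta}\to D_{\zeta}\mathbf{a}(\cdot,\nabla Y^{0})$ and $g^{\theta}\to D_{\bar{y}}\gamma(\cdot,\bar{Y}^{0},u)$ strongly (and boundedly) along a subsequence; that estimate, in turn, crucially exploits the cancellation of the noise (so that no It\^{o} term appears) and the strong monotonicity of $A$ in $\mathbf{y}$.
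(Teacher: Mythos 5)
Your proposal is correct and follows essentially the same route as the paper: the uniform bound on $\theta^{-1}(\mathbf{Y}^{\theta}-\mathbf{Y}^{0})$ via cancellation of the noise and the strong monotonicity from (A$_{2}$), (C$_{2}$), weak compactness, and identification of the limit through the mean-value forms of $\mathbf{a}$ and $\gamma$; your only reorganization is to settle well-posedness of \eqref{Eq_Var} up front by the variational theory, whereas the paper obtains existence as a by-product of the limit identification and quotes uniqueness afterwards. The one place where you are thinner than the paper is the assertion that the $\mathcal{G}$-type remainder converges strongly in $L^{2}$: unlike $a^{\theta}$ and $g^{\theta}$, the averaged factor built from $D_{u}\gamma$ is only dominated by $c_{0}(\tilde{\rho}+|\bar{Y}^{0}|+\theta|\bar{Z}^{\theta}|)$, which depends on $\theta$, so plain dominated convergence does not apply; the paper handles exactly this point with the cutoff $\rho_{1}$ and a Chebyshev estimate, and in your setup the gap is closed by noting that $\theta\bar{Z}^{\theta}\rightarrow0$ strongly in $L^{2}$ and invoking the generalized (Pratt/Vitali) dominated convergence theorem.
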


\begin{proof}
We have, by It\^{o}'s formula,%
\begin{align*}
\mathbb{E}\left\Vert \mathbf{Y}^{\theta}(t)\right\Vert _{H}^{2} &
=\left\Vert \mathbf{y}_{0}\right\Vert _{H}^{2}-2\mathbb{E}\left[
\int_{0}^{t}\int_{\mathcal{O}}\mathbf{a}(x,\nabla Y^{\theta}(s))\cdot\nabla
Y^{\theta }(s)\,dx\,ds\right] \\
& -2\mathbb{E}\left[ \int_{0}^{t}\int_{\Gamma}\gamma(\xi,\bar{Y}^{\theta
}(s),u^{\theta}(s))\bar{Y}^{\theta}(s)\,d\xi\,ds\right] +t\left\Vert
B\right\Vert _{L_{2}(H)}^{2},\ t\in\lbrack0,T],
\end{align*}
therefore, by (A$_{1}$), (A$_{2}$), (C$_{1}$) and (C$_{2}$),%
\begin{equation}
\sup_{\theta\in\lbrack0,1]}\left[ \sup_{t\in\lbrack0,T]}\mathbb{E}\left\Vert
\mathbf{Y}^{\theta}(t)\right\Vert _{H}^{2}+\mathbb{E}\int_{0}^{T}\left\Vert
\mathbf{Y}^{\theta}(t)\right\Vert _{V}^{2}dt\right] <+\infty .
\label{rel_bound}
\end{equation}
Since, for any $t\in\lbrack0,T]$,%
\begin{align*}
\left\Vert \left( \mathbf{Y}^{\theta}(t)-\mathbf{Y}^{0}(t)\right)
\right\Vert _{H}^{2}= & -2\int_{0}^{t}\int_{\mathcal{O}}\left[ \mathbf{a}%
(x,\nabla Y^{\theta}(s))-\mathbf{a}(x,\nabla Y^{0}(s))\right] \cdot\left[
\nabla Y^{\theta}(s)-\nabla Y^{0}(s)\right] \,dx\,ds \\
& -2\int_{0}^{t}\int_{\Gamma}\left[ \gamma(\xi,\bar{Y}^{\theta}(s),u^{%
\theta}(s))-\gamma(\xi,\bar{Y}^{0}(s),u(s))\right] \left[ \bar {Y}%
^{\theta}(s)-\bar{Y}^{0}(s)\right] \,d\xi\,ds,
\end{align*}
we have, by the assumptions on $\mathbf{a}$ and $\gamma$,%
\begin{align*}
& \left\Vert \mathbf{Y}^{\theta}(t)-\mathbf{Y}^{0}(t)\right\Vert
_{H}^{2}+2\delta\int_{0}^{t}\left\Vert \mathbf{Y}^{\theta}(s)-\mathbf{Y}%
^{0}(s)\right\Vert _{V}^{2}\,ds \\
& \leq-2\int_{0}^{t}\int_{\Gamma}\left[ \gamma(\xi,\bar{Y}^{0}(s),u^{\theta
}(s))-\gamma(\xi,\bar{Y}^{0}(s),u(s))\right] \left( \bar{Y}^{\theta}(s)-\bar{%
Y}^{0}(s)\right) \,d\xi\,ds \\
& =-2\theta\int_{0}^{t}\int_{\Gamma}\left[ \int_{0}^{1}D_{u}\gamma(\xi ,\bar{%
Y}^{0}(s),u^{\lambda\theta}(s))d\lambda\right] (v(s)-u(s))\left( \bar{Y}%
^{\theta}(s)-\bar{Y}^{0}(s)\right) \,d\xi\,ds \\
& \leq C\theta\int_{0}^{t}\int_{\Gamma}\left[ \tilde{\rho}(\xi)+\left\vert
\bar{Y}^{0}(s)\right\vert \right] \left( \bar{Y}^{\theta}(s)-\bar{Y}%
^{0}(s)\right) \,d\xi\,ds \\
& \leq C\theta^{2}\int_{0}^{t}\left( \tilde{\rho}(\xi)^{2}+\left\Vert
\mathbf{Y}^{0}(s)\right\Vert _{V}^{2}\right) ds+\delta\int_{0}^{t}\left\Vert
\mathbf{Y}^{\theta}(s)-\mathbf{Y}^{0}(s)\right\Vert _{V}^{2}\,ds,
\end{align*}
where \ $C>0$ is a constant whose value is allowed to change from line to
line. Hence
\begin{equation}
\mathbb{E}\left[ \sup_{\theta\in\lbrack0,1]}\sup_{t\in\lbrack0,T]}\left\Vert
\tfrac{1}{\theta}(\mathbf{Y}^{\theta}(t)-\mathbf{Y}^{0}(t))\right\Vert
_{H}^{2}\right] +\mathbb{E}\left[ \sup_{\theta\in\lbrack0,1]}\int_{0}^{T}%
\left\Vert \tfrac{1}{\theta}(\mathbf{Y}^{\theta}(t)-\mathbf{Y}%
^{0}(t))\right\Vert _{V}^{2}\,ds\right] <+\infty.   \label{rel_bound_deriv}
\end{equation}
Then there exists a progressively measurable process $\mathbf{\tilde{Z}}\in
L^{2}\left( \Omega\times\lbrack0,T];V\right) $ such that, at least on a
subsequence:

\begin{itemize}
\item $\tfrac{1}{\theta}(\mathbf{Y}^{\theta}-\mathbf{Y}^{0})$ converges
weakly to $\mathbf{\tilde{Z}}$ as $\theta\rightarrow0$ in $L^{2}\left(
\Omega \times\lbrack0,T];V\right) $;

\item $\nabla Y^{\theta}$ converges to $\nabla Y^{0}$ a.e. as $\theta
\rightarrow0$ on $\Omega\times\lbrack0,T]\times\mathcal{O}$;

\item $\bar{Y}^{\theta}$ converges to $\bar{Y}^{0}$ a.e. as $\theta
\rightarrow0$ on $\Omega\times\lbrack0,T]\times\Gamma$.
\end{itemize}

For $\mathbf{z}=(z,\bar{z})\in V$ with $z\in C_{b}^{1}\left( \mathcal{O}%
\right) $, we have%
\begin{align}
\left\langle \frac{\mathbf{Y}^{\theta}(t)-\mathbf{Y}^{0}(t)}{\theta },%
\mathbf{z}\right\rangle _{H}= & -\int_{0}^{t}\int_{\mathcal{O}}\frac{\mathbf{%
a}(x,\nabla Y^{\theta}(s))-\mathbf{a}(x,\nabla Y^{0}(s))}{\theta}\cdot\nabla
z\,dx\,ds  \label{rel_weak_theta} \\
& -\int_{0}^{t}\int_{\Gamma}\frac{\gamma(\xi,\bar{Y}^{\theta}(s),u^{\theta
}(s))-\gamma(\xi,\bar{Y}^{0}(s),u(s))}{\theta}\bar{z}\,d\xi\,ds  \notag \\
= & -\int_{0}^{t}\int_{\mathcal{O}}T^{1,\theta}(s)\frac{\nabla(Y^{\theta
}-Y^{0})(s)}{\theta}\cdot\nabla zdxds-\int_{0}^{t}\int_{\Gamma}T^{2,\theta
}(s)\frac{(\bar{Y}^{\theta}-\bar{Y}^{0})(s)}{\theta}\bar{z}\,d\xi \,ds
\notag \\
& -\int_{0}^{t}\int_{\Gamma}T^{3,\theta}(s)(v(s)-u(s))\bar{z}d\xi
ds-\int_{\Gamma}D_{u}\gamma(\xi,\bar{Y}^{0}(s),u(s))(v(s)-u(s))\bar{z}\,d\xi
\notag \\
& -\int_{0}^{t}\int_{\mathcal{O}}D_{\zeta}\mathbf{a}(x,\nabla Y^{0}(s)))%
\frac{\nabla(Y^{\theta}-Y^{0})(s)}{\theta}\cdot\nabla z\,dx\,ds  \notag \\
& -\int_{0}^{t}\int_{\Gamma}D_{\bar{y}}\gamma(\xi,\bar{Y}^{0}(s),u(s))\frac {%
(\bar{Y}^{\theta}-\bar{Y}^{0})(s)}{\theta}\bar{z}\,d\xi\,ds,  \notag
\end{align}
where, for the sake of simplicity, we have denoted
\begin{align*}
T^{1,\theta}(s) & :=\int_{0}^{1}\left[ D_{\zeta}\mathbf{a}(x,\nabla
Y^{0}(s)+\lambda\nabla(Y^{\theta}-Y^{0})(s))-D_{\zeta}\mathbf{a}(x,\nabla
Y^{0}(s))\right] d\lambda; \\
T^{2,\theta}(s) & :=\int_{0}^{1}\left[ D_{\bar{y}}\gamma(\xi,\bar{Y}%
^{0}(s)+\lambda(\bar{Y}^{\theta}-\bar{Y}^{0})(s),u^{\lambda\theta}(s))-D_{%
\bar{y}}\gamma(\xi,\bar{Y}^{0}(s),u(s))\right] d\lambda; \\
T^{3,\theta}(s) & :=\int_{0}^{1}\left[ D_{u}\gamma(\xi,\bar{Y}%
^{0}(s)+\lambda(\bar{Y}^{\theta}-\bar{Y}^{0})(s),u^{\lambda\theta}(s))-D_{u}%
\gamma(\xi,\bar{Y}^{0}(s),u(s))\right] d\lambda.
\end{align*}
By the dominated convergence theorem and (\ref{rel_bound_deriv}), since $%
T^{1,\theta}$ and $T^{2,\theta}$ are bounded, we have that%
\begin{equation*}
\lim_{\theta\rightarrow0}\mathbb{E}\left[ \int_{0}^{T}\int_{\mathcal{O}%
}\left\vert T^{1,\theta}(s)\right\vert \left\vert \frac{\nabla(Y^{\theta
}-Y^{0})(s)}{\theta}\right\vert \left\vert \nabla z\right\vert \,dx\,ds%
\right] =0
\end{equation*}
and%
\begin{equation*}
\lim_{\theta\rightarrow0}\mathbb{E}\left[ \int_{0}^{t}\int_{\Gamma}\left%
\vert T^{2,\theta}(s)\right\vert \left\vert \frac{(\bar{Y}^{\theta}-\bar{Y}%
^{0})(s)}{\theta}\right\vert \left\vert \bar{z}\right\vert \,d\xi\,ds\right]
=0.
\end{equation*}
We also have that%
\begin{align*}
\mathbb{E}\left[ \int_{0}^{t}\int_{\Gamma}\left\vert T^{3,\theta
}(s)\right\vert \left\vert (v(s)-u(s))\right\vert \left\vert \bar {z}%
\right\vert \,d\xi\,ds\right] \leq & C\mathbb{E}\left[ \int_{0}^{T}\int_{%
\Gamma}\left\vert T^{3,\theta}(s)\right\vert \rho_{1}\left( (\bar {Y}%
^{\theta}-\bar{Y}^{0})(s)\right) \,d\xi\,ds\right] \\
& +C\mathbb{E}\left[ \int_{0}^{T}\int_{\Gamma}\left\vert T^{3,\theta
}(s)\right\vert \left( 1-\rho_{1}\right) \left( (\bar{Y}^{\theta}-\bar {Y}%
^{0})(s)\right) \,d\xi\,ds\right] ,
\end{align*}
where $\rho_{1}$ is a smooth function defined on $\mathbb{R}$ such that $%
0\leq\rho_{1}\leq1$, $\rho_{1}(\bar{y})=1$ for $\left\vert \bar{y}%
\right\vert \leq1$ and $\rho_{1}(\bar{y})=0$ for $\left\vert \bar{y}%
\right\vert \geq2$. Since, by (C$_{1}$),%
\begin{equation*}
\left\vert T^{3,\theta}(s)\right\vert \rho_{1}\left( (\bar{Y}^{\theta}-\bar{Y%
}^{0})(s)\right) \leq C\left( \tilde{\rho}(\xi)+\left\vert \bar {Y}%
^{0}(s)\right\vert \right) ,
\end{equation*}
we have, by the dominated convergence theorem, that%
\begin{equation*}
\lim_{\theta\rightarrow0}\mathbb{E}\left[ \int_{0}^{T}\int_{\Gamma}\left%
\vert T^{3,\theta}(s)\right\vert \rho_{1}\left( (\bar{Y}^{\theta}-\bar{Y}%
^{0})(s)\right) \,d\xi\,ds\right] =0.
\end{equation*}
On the other hand, by (\ref{rel_bound}) and (\ref{rel_bound_deriv}),%
\begin{align*}
& \mathbb{E}\left[ \int_{0}^{T}\int_{\Gamma}\left\vert T^{3,\theta
}(s)\right\vert \left( 1-\rho_{1}\right) \left( (\bar{Y}^{\theta}-\bar {Y}%
^{0})(s)\right) \,d\xi\,ds\right] \\
& \leq C\mathbb{E}\left[ \int_{0}^{T}\int_{\Gamma}\left( \tilde{\rho}%
(\xi)+\left\vert \bar{Y}^{0}(s)\right\vert +\left\vert \bar{Y}^{\theta
}(s)\right\vert \right) \mathbf{1}_{\left\{ \left\vert (\bar{Y}^{\theta }-%
\bar{Y}^{0})(s)\right\vert \geq1\right\} }\,d\xi\,ds\right] \\
& \leq C\left( \mathbb{E}\left[ \int_{0}^{T}\int_{\Gamma}\mathbf{1}_{\left\{
\left\vert (\bar{Y}^{\theta}-\bar{Y}^{0})(s)\right\vert \geq1\right\} }d\xi%
\right] \right) ^{1/2}ds\leq C\left( \mathbb{E}\left[ \int_{0}^{T}\int_{%
\Gamma}\left\vert (\bar{Y}^{\theta}-\bar{Y}^{0})(s)\right\vert ^{2}\,d\xi\,ds%
\right] \right) ^{1/2}\leq C\theta.
\end{align*}
Therefore,%
\begin{equation*}
\lim_{\theta\rightarrow0}\mathbb{E}\left[ \int_{0}^{T}\int_{\Gamma}\left%
\vert T^{3,\theta}(s)\right\vert \left\vert v(s)-u(s)\right\vert \left\vert
\bar {z}\right\vert \,d\xi\,ds\right] =0.
\end{equation*}

Let $\mathbf{Z\in}C([0,T];L^{2}\left( \Omega;V^{\ast}\right) )$ be defined by%
\begin{equation*}
\mathbf{Z}(t)=\int_{0}^{t}D_{\mathbf{y}}A(\mathbf{Y}^{u}(s),u(s))\mathbf{%
\tilde{Z}}(s)\,ds+\int_{0}^{t}\mathcal{G}(\bar{Y}^{u}(s),u(s),v(s)-u(s))%
\,ds,\ t\in\left[ 0,T\right] .
\end{equation*}
By the weak convergence of $\tfrac{1}{\theta}(\mathbf{Y}^{\theta}-\mathbf{Y}%
^{0})$ to $\mathbf{\tilde{Z}}$ in $L^{2}\left( \Omega\times
\lbrack0,T];V\right) $, the boundedness of $\frac{\mathbf{Y}^{\theta }(t)-%
\mathbf{Y}^{0}(t)}{\theta}$ in $L^{2}(\Omega;H)$ and the density of $\left\{
(z,\bar{z})\in V\mid z\in C_{b}^{1}(\mathcal{O})\right\} $ in $V$, we can
pass to the limit in relation (\ref{rel_weak_theta}) and obtain that, for
every $t\in\left[ 0,T\right] $, $\mathbf{Z}(t)\in L^{2}(\Omega;H)$ and $%
\frac{\mathbf{Y}^{\theta}(t)-\mathbf{Y}^{0}(t)}{\theta}$ converges weakly to
$\mathbf{Z}(t)$ in $L^{2}(\Omega;H)$. This allows the identification%
\begin{equation*}
\mathbf{Z}(t)=\mathbf{\tilde{Z}}(t),\ \text{a.e. }t\in\lbrack0,T],
\end{equation*}
from which we can infer that $\mathbf{Z}$ is a variational solution of
equation (\ref{Eq_Var}).

The uniqueness of the solution of (\ref{Eq_Var}) is obtained by applying
Theorem 4.2.4 in \cite{PreRoc05}, for instance. A consequence of the
uniqueness is that the weak convergences stated inside this argument hold
not only on a subsequence, but on a whole right neighborhood of $0$.
\end{proof}

\subsection{Necessary conditions of optimality}

In this section we will derive, in the form of a maximum principle,
necessary conditions for an admissible control to be optimal. Let us define
the \emph{Hamiltonian} $\mathcal{H}:V\times L^{2}\left( \Gamma;U\right)
\times V\times L_{2}(H)\rightarrow\mathbb{R}$ by%
\begin{equation*}
\mathcal{H}(\mathbf{y},u,\mathbf{p},q):={}_{V^{\ast}}\left\langle A(\mathbf{y%
},u),\mathbf{p}\right\rangle _{V}+L(\mathbf{y},u)+\func{tr}(qB).
\end{equation*}

\begin{theorem}
Let $u^{\ast}$ be an optimal control. Then, a.s., $d\xi\, dt$-a.e.,%
\begin{equation}
\left[ \bar{P}^{u^{\ast}}(t,\xi)D_{u}\gamma(\xi,\bar{Y}^{u^{\ast}}(t,%
\xi),u^{\ast}(t,\xi))-D_{u}\bar{\ell}(\xi,\bar{Y}^{u^{\ast}}(t,\xi
),u^{\ast}(t,\xi))\right] \cdot(v-u^{\ast}(t,\xi))\leq0,\ \forall v\in U.
\label{Rel_max_princ}
\end{equation}
\end{theorem}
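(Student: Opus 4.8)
The plan is to derive \eqref{Rel_max_princ} from the fact that $\theta=0$ is a minimum of the scalar function $\theta\mapsto J(u^{\theta})$ on $[0,1]$, for $u^{\theta}=(1-\theta)u^{\ast}+\theta v$ with $v$ an arbitrary admissible control such that $v-u^{\ast}$ is bounded, and then remove the boundedness restriction at the end by a localization/density argument. First I would compute the right derivative of $J(u^{\theta})$ at $\theta=0$. Starting from \eqref{Cost_functional}, write
\begin{equation*}
\frac{J(u^{\theta})-J(u^{\ast})}{\theta}=\mathbb{E}\!\left[\int_{0}^{T}\frac{L(\mathbf{Y}^{\theta}(t),u^{\theta}(t))-L(\mathbf{Y}^{0}(t),u^{\ast}(t))}{\theta}\,dt+\frac{\Psi(\mathbf{Y}^{\theta}(T))-\Psi(\mathbf{Y}^{0}(T))}{\theta}\right].
\end{equation*}
Using the $C^{1}$-regularity of $\ell,\bar\ell,\psi,\bar\psi$ together with the growth bounds (F$_1$), (L$_1$), the uniform estimate \eqref{rel_bound}, the derivative estimate \eqref{rel_bound_deriv}, and the a.e.\ convergences and weak convergences established in Proposition \ref{P_var_eq} ($\tfrac{1}{\theta}(\mathbf{Y}^{\theta}-\mathbf{Y}^{0})\rightharpoonup\mathbf{Z}$ in $L^{2}(\Omega\times[0,T];V)$ and $\tfrac1\theta(\mathbf{Y}^{\theta}(T)-\mathbf{Y}^{0}(T))\rightharpoonup\mathbf{Z}(T)$ in $L^{2}(\Omega;H)$), one passes to the limit; as in the proof of Proposition \ref{P_var_eq} the nonlinear increment quotients are handled by first-order Taylor expansion with integral remainder and dominated convergence, the troublesome $D_u$-terms being split via a cutoff $\rho_1$. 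This yields
\begin{equation*}
0\leq\left.\frac{d}{d\theta}\right|_{0^{+}}J(u^{\theta})=\mathbb{E}\!\left[\int_{0}^{T}\!\left(\langle D_{\mathbf{y}}L(\mathbf{Y}^{u^{\ast}}(t),u^{\ast}(t)),\mathbf{Z}(t)\rangle_{H}+\!\int_{\Gamma}\!D_{u}\bar\ell(\xi,\bar Y^{u^{\ast}}(t,\xi),u^{\ast}(t,\xi))(v(t,\xi)-u^{\ast}(t,\xi))\,d\xi\right)dt+\langle D_{\mathbf{y}}\Psi(\mathbf{Y}^{u^{\ast}}(T)),\mathbf{Z}(T)\rangle_{H}\right].
\end{equation*}

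Next I would use the duality between the variation equation \eqref{Eq_Var} and the adjoint equation \eqref{Eq_Adjoint}. Apply the It\^o formula to $\langle\mathbf{Z}(t),\mathbf{P}^{u^{\ast}}(t)\rangle_{H}$ on $[0,T]$. Since $\mathbf{Z}$ has no martingale part and $\mathbf{P}^{u^{\ast}}$ has martingale part $\int Q^{u^{\ast}}\,dW$, and $\mathbf{Z}(0)=0$, this gives
\begin{align*}
\mathbb{E}\langle\mathbf{Z}(T),D_{\mathbf{y}}\Psi(\mathbf{Y}^{u^{\ast}}(T))\rangle_{H}={}&\mathbb{E}\!\int_{0}^{T}\Big[{}_{V^{\ast}}\langle D_{\mathbf{y}}A(\mathbf{Y}^{u^{\ast}}(t),u^{\ast}(t))\mathbf{Z}(t),\mathbf{P}^{u^{\ast}}(t)\rangle_{V}+\langle\mathcal{G}(\bar Y^{u^{\ast}}(t),u^{\ast}(t),v(t)-u^{\ast}(t)),\mathbf{P}^{u^{\ast}}(t)\rangle_{H}\\
&-{}_{V^{\ast}}\langle(D_{\mathbf{y}}A)^{\ast}(\mathbf{Y}^{u^{\ast}}(t),u^{\ast}(t))\mathbf{P}^{u^{\ast}}(t),\mathbf{Z}(t)\rangle_{V}-\langle D_{\mathbf{y}}L(\mathbf{Y}^{u^{\ast}}(t),u^{\ast}(t)),\mathbf{Z}(t)\rangle_{H}\Big]dt.
\end{align*}
The first and third terms cancel by definition of the adjoint operator, so the left side equals $\mathbb{E}\!\int_{0}^{T}\big[\langle\mathcal{G}(\dots),\mathbf{P}^{u^{\ast}}(t)\rangle_{H}-\langle D_{\mathbf{y}}L(\dots),\mathbf{Z}(t)\rangle_{H}\big]dt$. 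Substituting this into the inequality above, the $\langle D_{\mathbf{y}}L,\mathbf{Z}\rangle_H$ terms cancel and, recalling $\mathcal{G}(\bar y,u,\bar u)=(0,-D_{u}\gamma(\cdot,\bar y,u)\bar u)$ so that $\langle\mathcal{G}(\bar Y^{u^{\ast}},u^{\ast},v-u^{\ast}),\mathbf{P}^{u^{\ast}}\rangle_{H}=-\int_{\Gamma}\bar P^{u^{\ast}}(t,\xi)D_{u}\gamma(\xi,\bar Y^{u^{\ast}}(t,\xi),u^{\ast}(t,\xi))(v(t,\xi)-u^{\ast}(t,\xi))\,d\xi$, we arrive at
\begin{equation*}
\mathbb{E}\!\int_{0}^{T}\!\!\int_{\Gamma}\!\Big[D_{u}\bar\ell(\xi,\bar Y^{u^{\ast}}(t,\xi),u^{\ast}(t,\xi))-\bar P^{u^{\ast}}(t,\xi)D_{u}\gamma(\xi,\bar Y^{u^{\ast}}(t,\xi),u^{\ast}(t,\xi))\Big]\cdot(v(t,\xi)-u^{\ast}(t,\xi))\,d\xi\,dt\geq 0,
\end{equation*}
valid for every admissible $v$ with $v-u^{\ast}$ bounded.

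Finally I would pass from this integrated inequality to the pointwise statement \eqref{Rel_max_princ} by a standard measurable-selection argument: if \eqref{Rel_max_princ} failed on a set $D\subseteq\Omega\times[0,T]\times\Gamma$ of positive measure, then since $U$ is a closed convex subset of $\mathbb{R}^{m}$ one could select a measurable $v$ with values in $U$, equal to $u^{\ast}$ off a bounded truncation of $D$, for which the integrand is strictly negative on a set of positive measure and nonpositive elsewhere, contradicting the inequality; the boundedness of $v-u^{\ast}$ is arranged by intersecting $D$ with $\{|u^{\ast}|\leq N\}$ and letting $N\to\infty$, noting $D_u\gamma$ and $D_u\bar\ell$ have at most linear growth in $\bar y$ by (C$_1$), (L$_1$) while $\bar P^{u^{\ast}}\in L^2$, so the integrand is locally integrable. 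I expect the main obstacle to be the first step, i.e.\ rigorously justifying the interchange of limit and expectation in computing $\tfrac{d}{d\theta}|_{0^{+}}J(u^{\theta})$: the $D_u\bar\ell$ increment quotient involves $v-u^{\ast}$ multiplied against quantities that are only $L^2$, so one must reproduce the $\rho_1$-cutoff splitting and the uniform integrability arguments of Proposition \ref{P_var_eq} carefully rather than invoking naive dominated convergence; everything after the It\^o/duality computation is essentially bookkeeping.
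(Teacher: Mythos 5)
Your proposal is correct and follows essentially the same route as the paper: perturb the optimal control along $u^{\theta}=(1-\theta)u^{\ast}+\theta v$ with $v-u^{\ast}$ bounded, use the weak convergence of $\tfrac{1}{\theta}(\mathbf{Y}^{\theta}-\mathbf{Y}^{0})$ to the solution $\mathbf{Z}$ of the variation equation (Proposition \ref{P_var_eq}) to differentiate $J$, apply It\^o's formula to $\langle\mathbf{P}^{u^{\ast}},\mathbf{Z}\rangle_{H}$ to trade the terminal and $D_{\mathbf{y}}L$ terms for the $\mathcal{G}$-term, and conclude pointwise by arbitrariness of $v$. The only difference is cosmetic ordering (you compute the derivative of $J$ before the duality identity, the paper after), and your explicit measurable-selection/truncation step at the end merely fleshes out what the paper dismisses as "easy".
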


\noindent\textit{Remark.} This inequality is equivalent to%
\begin{equation*}
\mathcal{H}_{u}(\mathbf{Y}^{u^{\ast}}(t),u^{\ast}(t),\mathbf{P}%
^{u^{\ast}}(t),Q^{u^{\ast}}(t);v(t)-u^{\ast}(t))\geq0,\ \mathbb{P}dt\text{%
-a.e., }\forall v\in\mathcal{U}_{u^{\ast}}^{\infty},
\end{equation*}
where $\mathcal{U}_{u^{\ast}}^{\infty}$ is the set of admissible controls $v$
such that $v-u^{\ast}\in L^{\infty}\left( \Gamma;\mathbb{R}^{m}\right) $, $%
\mathbb{P}dt$-a.e. and $\mathcal{H}_{u}(\mathbf{y},u,\mathbf{p},q;w)$
denotes the directional derivative of $\mathcal{H}$ with respect to $u$ in
the direction $w$ (which exists if $w\in L^{\infty}\left( \Gamma;\mathbb{R}%
^{m}\right) $ and $u+w\in L^{2}\left( \Gamma;U\right) $). This is known as
the \emph{local form} of the maximum principle.

\begin{proof}
As in the previous section, we will take first an arbitrary control $v$ such
that $v-u^{\ast}$ is bounded and we will use the same notations $u^{\theta}$%
, $\mathbf{Y}^{\theta}$, $\mathbf{Z}$, for $\theta\in\lbrack0,1]$. We will
also write $\mathbf{P}$, $Q$ instead of $\mathbf{P}^{u^{\ast}}$, $%
Q^{u^{\ast}}$, respectively. Let us apply It\^{o}'s formula to $\mathbf{P}%
\cdot\mathbf{Z}$:%
\begin{align*}
\left\langle \mathbf{P}(t),\mathbf{Z}(t)\right\rangle _{H}= &
-\int_{0}^{t}{}_{V^{\ast}}\left\langle (D_{\mathbf{y}}A)^{\ast}(\mathbf{Y}%
^{0}(s),u^{\ast }(s))\mathbf{P}(s),\mathbf{Z}(s)\right\rangle _{V}\,ds \\
& -\int_{0}^{t}D_{\mathbf{y}}L(\mathbf{Y}^{0}(s),u^{\ast}(s))\mathbf{Z}%
(s)\,ds \\
& +\int_{0}^{t}{}_{V^{\ast}}\left\langle D_{\mathbf{y}}A(\mathbf{Y}%
^{0}(s),u^{\ast}(s))\mathbf{Z}(s)+\mathcal{G}(\bar{Y}^{0}(s),u^{\ast
}(s),v(s)-u^{\ast}(s)),\mathbf{P}(s)\right\rangle _{V}\,ds \\
& -\int_{0}^{t}\left\langle \mathbf{Z}(s),Q(s)\,dW(s)\right\rangle _{H},\
\forall t\in\lbrack0,T].
\end{align*}
Therefore, letting $t=T$ and taking expectation, we get%
\begin{align}
\mathbb{E}\left\langle D_{\mathbf{y}}\Psi(\mathbf{Y}^{0}(T)),\mathbf{Z}%
(T)\right\rangle _{H}= & \mathbb{E}\left[ \int_{0}^{T}{}_{V^{\ast}}\left%
\langle \mathcal{G}(\bar{Y}^{0}(s),u^{\ast}(s),v(s)-u^{\ast }(s)),\mathbf{P}%
(s)\right\rangle _{V}\,ds\right]  \label{rel_duality} \\
& -\mathbb{E}\left[ \int_{0}^{T}D_{\mathbf{y}}L(\mathbf{Y}^{0}(s),u^{\ast
}(s))\mathbf{Z}(s)\,ds\right] .  \notag
\end{align}
On the other hand, since $u^{\ast}$ is an optimal control, $J(u^{\ast})\leq
J(u^{\theta})$ for any $\theta\in(0,1)$, \textit{i.e.}%
\begin{equation*}
\mathbb{E}\left[ \int_{0}^{T}\left( L(\mathbf{Y}^{\theta}(t),u^{\theta
}(t))-L(\mathbf{Y}^{0}(t),u^{\ast}(t))\right) dt+\Psi(\mathbf{Y}^{\theta
}(T))-\Psi(\mathbf{Y}^{0}(T))\right] \geq0,
\end{equation*}
which is equivalent to%
\begin{align*}
& \mathbb{E}\left[ \int_{0}^{T}\int_{0}^{1}D_{\mathbf{y}}L(\mathbf{Y}%
^{0}(t)+\lambda(\mathbf{Y}^{\theta}-\mathbf{Y}^{0})(t),u^{\lambda\theta
}(t))d\lambda\frac{(\mathbf{Y}^{\theta}-\mathbf{Y}^{0})(t)}{\theta }\,dt%
\right] \\
& +\mathbb{E}\left[ \int_{0}^{T}\int_{0}^{1}L_{u}(\mathbf{Y}^{0}(t)+\lambda(%
\mathbf{Y}^{\theta}-\mathbf{Y}^{0})(t),u^{\lambda\theta
}(t);v(t)-u^{\ast}(t))d\lambda\,dt\right] \\
& +\mathbb{E}\left[ \int_{0}^{1}D_{\mathbf{y}}\Psi(\mathbf{Y}^{0}(T)+\lambda(%
\mathbf{Y}^{\theta}-\mathbf{Y}^{0})(T))\,d\lambda\frac {(\mathbf{Y}^{\theta}-%
\mathbf{Y}^{0})(T)}{\theta}\right] \geq0.
\end{align*}
Here, $L_{u}(\mathbf{y},u;w)$ denotes the directional derivative of $L$ with
respect to $u$ in the direction $w$. Passing to the limit as $\theta
\rightarrow0$, by the weak convergence property stated in Proposition \ref%
{P_var_eq} and similar arguments as in its proof, we obtain%
\begin{equation*}
\mathbb{E}\left\{ \int_{0}^{T}\left[ D_{\mathbf{y}}L(\mathbf{Y}%
^{0}(t),u^{\ast}(t))\mathbf{Z}(t)+L_{u}(\mathbf{Y}^{0}(t),u^{\ast
}(t);v(t)-u^{\ast}(t))\right] dt\right\} \geq-\mathbb{E}\left\langle
D_{y}\Psi(\mathbf{Y}^{0}(T)),\mathbf{Z}(T)\right\rangle _{H}.
\end{equation*}
Combining this inequality with relation (\ref{rel_duality}), we derive%
\begin{equation*}
\mathbb{E}\left\{ \int_{0}^{T}\left[ _{V^{\ast}}\left\langle \mathcal{G}(%
\bar{Y}^{0}(s),u^{\ast}(s),v(s)-u^{\ast}(s)),\mathbf{P}(s)\right\rangle
_{V}+L_{u}(\mathbf{Y}^{0}(t),u^{\ast}(t);v(t)-u^{\ast}(t))\right] ds\right\}
\geq0,
\end{equation*}
\textit{i.e.}%
\begin{equation*}
\mathbb{E}\left\{ \int_{0}^{T}\int_{\Gamma}\left[ \bar{P}(t,\xi)D_{u}\gamma(%
\xi,\bar{Y}^{0}(t,\xi),u^{\ast}(t,\xi))-D_{u}\bar{\ell}(\xi,\bar{Y}%
^{0}(t,\xi),u^{\ast}(t,\xi))\right] \cdot(v(t,\xi)-u^{\ast}(t,\xi
))\,d\xi\,dt\right\} \leq0.
\end{equation*}
Since the control $v$ such that $v-u^{\ast}$ is bounded is chosen
arbitrarily, we can infer easily that a.s., $d\xi dt$-a.e.
\begin{equation*}
\left[ \bar{P}(t,\xi)D_{u}\gamma(\xi,\bar{Y}^{0}(t,\xi),u^{\ast}(t,%
\xi))-D_{u}\bar{\ell}(\xi,\bar{Y}^{0}(t,\xi),u^{\ast}(t,\xi))\right]
\cdot(v-u^{\ast}(t,\xi))\leq0,\ \forall v\in U.
\end{equation*}
\end{proof}

\subsection{Sufficient conditions of optimality}

In this section we show that condition (\ref{Rel_max_princ}) is, under some
supplementary assumptions, sufficient for the optimality of a given control.

\begin{theorem}
\label{Th: suff cond}Let $u^{\ast}$ be a control satisfying (\ref%
{Rel_max_princ}). If the mappings $\Psi$ and
\begin{equation}
\begin{aligned} & V\times L^{2}\left( \Gamma;U\right) \to \mathbb R \\
&(\mathbf{y},u)\mapsto\mathcal{H}(\mathbf{y},u,\mathbf{P}^{u^{%
\ast}}(t),Q^{u^{\ast}}(t)) \end{aligned}   \label{convex_map}
\end{equation}
are convex a.s., $dt$-a.e., then $u^{\ast}$ is optimal.
\end{theorem}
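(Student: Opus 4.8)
The plan is to use the standard convexity argument for sufficiency of the maximum principle, adapted to the variational/SPDE setting via the duality relation already established. For an arbitrary admissible control $v$ (first with $v-u^{\ast}$ bounded, then by approximation in general), I would estimate $J(v)-J(u^{\ast})$ from below. Write
\begin{equation*}
J(v)-J(u^{\ast})=\mathbb{E}\int_{0}^{T}\left[L(\mathbf{Y}^{v}(t),v(t))-L(\mathbf{Y}^{u^{\ast}}(t),u^{\ast}(t))\right]dt+\mathbb{E}\left[\Psi(\mathbf{Y}^{v}(T))-\Psi(\mathbf{Y}^{u^{\ast}}(T))\right].
\end{equation*}
Using convexity of $\Psi$, bound the terminal term below by $\mathbb{E}\langle D_{\mathbf{y}}\Psi(\mathbf{Y}^{u^{\ast}}(T)),\mathbf{Y}^{v}(T)-\mathbf{Y}^{u^{\ast}}(T)\rangle_{H}$. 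Then apply It\^{o}'s formula to $\langle\mathbf{P}^{u^{\ast}}(t),\mathbf{Y}^{v}(t)-\mathbf{Y}^{u^{\ast}}(t)\rangle_{H}$ — exactly as in the proof of the necessary conditions, but now with the \emph{exact} difference $\mathbf{Y}^{v}-\mathbf{Y}^{u^{\ast}}$ rather than its linearization $\mathbf{Z}$ — using equation \eqref{Eq_state} for $\mathbf{Y}^{v}$ and $\mathbf{Y}^{u^{\ast}}$ and the adjoint equation \eqref{Eq_Adjoint}. The stochastic integral has zero expectation, and one obtains a duality identity expressing $\mathbb{E}\langle D_{\mathbf{y}}\Psi(\mathbf{Y}^{u^{\ast}}(T)),\mathbf{Y}^{v}(T)-\mathbf{Y}^{u^{\ast}}(T)\rangle_{H}$ in terms of $\mathbb{E}\int_{0}^{T}{}_{V^{\ast}}\langle A(\mathbf{Y}^{v},v)-A(\mathbf{Y}^{u^{\ast}},u^{\ast})-(D_{\mathbf{y}}A)(\mathbf{Y}^{u^{\ast}},u^{\ast})(\mathbf{Y}^{v}-\mathbf{Y}^{u^{\ast}}),\mathbf{P}^{u^{\ast}}\rangle_{V}\,dt$ minus a term involving $D_{\mathbf{y}}L(\mathbf{Y}^{u^{\ast}},u^{\ast})(\mathbf{Y}^{v}-\mathbf{Y}^{u^{\ast}})$.

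Next I would combine this with the convexity of the Hamiltonian $\mathcal{H}(\cdot,\cdot,\mathbf{P}^{u^{\ast}}(t),Q^{u^{\ast}}(t))$ in $(\mathbf{y},u)$. Convexity gives, pointwise in $t$ and a.s.,
\begin{equation*}
\mathcal{H}(\mathbf{Y}^{v},v,\mathbf{P}^{u^{\ast}},Q^{u^{\ast}})-\mathcal{H}(\mathbf{Y}^{u^{\ast}},u^{\ast},\mathbf{P}^{u^{\ast}},Q^{u^{\ast}})\geq {}_{V^{\ast}}\langle D_{\mathbf{y}}\mathcal{H},\mathbf{Y}^{v}-\mathbf{Y}^{u^{\ast}}\rangle_{V}+\mathcal{H}_{u}(\mathbf{Y}^{u^{\ast}},u^{\ast},\mathbf{P}^{u^{\ast}},Q^{u^{\ast}};v-u^{\ast}),
\end{equation*}
where $D_{\mathbf{y}}\mathcal{H}=(D_{\mathbf{y}}A)^{\ast}\mathbf{P}^{u^{\ast}}+D_{\mathbf{y}}L$; note the $\func{tr}(qB)$ term cancels since it does not depend on $(\mathbf{y},u)$. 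Since $L(\mathbf{Y}^{v},v)-L(\mathbf{Y}^{u^{\ast}},u^{\ast})=\mathcal{H}(\mathbf{Y}^{v},v,\cdot)-\mathcal{H}(\mathbf{Y}^{u^{\ast}},u^{\ast},\cdot)-{}_{V^{\ast}}\langle A(\mathbf{Y}^{v},v)-A(\mathbf{Y}^{u^{\ast}},u^{\ast}),\mathbf{P}^{u^{\ast}}\rangle_{V}$, substituting the Hamiltonian convexity inequality and the duality identity, the terms involving $A$, its derivative, and $D_{\mathbf{y}}L$ all telescope, leaving
\begin{equation*}
J(v)-J(u^{\ast})\geq\mathbb{E}\int_{0}^{T}\mathcal{H}_{u}(\mathbf{Y}^{u^{\ast}}(t),u^{\ast}(t),\mathbf{P}^{u^{\ast}}(t),Q^{u^{\ast}}(t);v(t)-u^{\ast}(t))\,dt.
\end{equation*}
By the Remark following \eqref{Rel_max_princ}, the integrand is $\geq0$ for every such $v$, hence $J(v)\geq J(u^{\ast})$.

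Finally, to remove the boundedness restriction on $v-u^{\ast}$, I would approximate a general admissible $v$ by $v_{N}:=u^{\ast}+\Pi_{U}\big((v-u^{\ast})\mathbf{1}_{\{|v-u^{\ast}|\leq N\}}\big)$ or a similar truncation staying in $U$ (using convexity of $U$), so that $v_{N}-u^{\ast}$ is bounded and $v_{N}\to v$ in $L^{2}(\Omega\times[0,T];L^{2}(\Gamma;U))$; then pass to the limit using continuity of $J$ in the control, which follows from the growth hypotheses (L$_{1}$), (F$_{1}$) and the stability estimate for the state equation (the a priori bound \eqref{rel_bound}-type argument). The main obstacle is the careful justification of the It\^{o}/duality computation for the exact difference $\mathbf{Y}^{v}-\mathbf{Y}^{u^{\ast}}$ in the Gelfand triple — one must check that this difference lies in $L^{2}(\Omega\times[0,T];V)$ with continuous $H$-valued paths (which follows from Theorem \ref{Th_Ex_state_eq} applied to both controls) and that $(D_{\mathbf{y}}A)^{\ast}\mathbf{P}^{u^{\ast}}$ pairs correctly, so that It\^{o}'s formula from \cite{PreRoc05} applies to the product; the convexity inequality for $\mathcal{H}$ in the $\mathbf{y}$-variable must also be interpreted with the correct duality bracket, but this is exactly the Gateaux derivative computed just before Section 3. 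Everything else is bookkeeping that parallels the proof of the necessary conditions.
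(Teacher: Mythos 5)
Your proposal is correct and follows essentially the same route as the paper: It\^{o}'s formula applied to $\langle\mathbf{P}^{u^{\ast}},\mathbf{Y}^{v}-\mathbf{Y}^{u^{\ast}}\rangle_{H}$, the convexity of $\Psi$ and of $\mathcal{H}(\cdot,\cdot,\mathbf{P}^{u^{\ast}}(t),Q^{u^{\ast}}(t))$ to reduce $J(v)-J(u^{\ast})$ to the sign condition (\ref{Rel_max_princ}), and a truncation argument to remove the boundedness of $v-u^{\ast}$ (the paper simply sets $v_{n}=u^{\ast}$ on the set where $|v-u^{\ast}|>n$, which stays in $U$ without any projection, and passes to the limit via the stability estimate you describe).
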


\noindent\textit{Remark.} Under the above convexity hypothesis, (\ref%
{Rel_max_princ}) becomes equivalent to%
\begin{equation*}
u^{\ast}(t)\in\func{argmin}\mathcal{H}(\mathbf{Y}^{u^{\ast}},\cdot,\mathbf{P}%
^{u^{\ast}}(t),Q^{u^{\ast}}(t)),\ \mathbb{P}dt\text{-a.e.,}
\end{equation*}
which is the \emph{global form} of the maximum principle.

\begin{proof}
For an admissible control $v$ such that $v-u^{\ast}$ is bounded, let us
apply It\^{o}'s formula to $\mathbf{P}^{u^{\ast}}\cdot\mathbf{(Y}^{v}-%
\mathbf{Y}^{u^{\ast}}\mathbf{)}$:%
\begin{align}
\left\langle \mathbf{P}^{u^{\ast}}(t),\mathbf{Y}^{v}(t)-\mathbf{Y}%
^{u^{\ast}}(t)\right\rangle _{H} & =-\int_{0}^{t}{}_{V^{\ast}}\left\langle
(D_{\mathbf{y}}A)^{\ast}(\mathbf{Y}^{u^{\ast}}(s),u^{\ast}(s))\mathbf{P}%
^{u^{\ast}}(s),\mathbf{Y}^{v}(s)-\mathbf{Y}^{u^{\ast}}(s)\right\rangle _{V}
\, ds  \label{rel_duality_dif} \\
& -\int_{0}^{t}D_{\mathbf{y}}L(\mathbf{Y}^{u^{\ast}}(s),u^{\ast }(s))(%
\mathbf{Y}^{v}(s)-\mathbf{Y}^{u^{\ast}}(s)) \, ds  \notag \\
& +\int_{0}^{t}{}_{V^{\ast}}\left\langle A(\mathbf{Y}^{v}(s),v(s))-A(\mathbf{%
Y}^{u^{\ast}}(s),u^{\ast}(s)),\mathbf{P}^{u^{\ast}}(s)\right\rangle _{V} \,
ds  \notag \\
& -\int_{0}^{t}\left\langle \mathbf{Y}^{v}(s)-\mathbf{Y}^{u^{%
\ast}}(s),Q^{u^{\ast}}(s) \, dW(s)\right\rangle _{H}.  \notag
\end{align}

Since the map $\mathcal{H}(\cdot,\cdot,\mathbf{P}^{u^{\ast}}(t),Q^{u^{%
\ast}}(t))$ is convex, we have%
\begin{multline*}
\mathcal{H}(\mathbf{Y}^{v}(t),v(t),\mathbf{P}^{u^{\ast}}(t),Q^{u^{\ast}}(t))-%
\mathcal{H}(\mathbf{Y}^{u^{\ast}}(t),u^{\ast}(t),\mathbf{P}%
^{u^{\ast}}(t),Q^{u^{\ast}}(t))\geq \\
\mathcal{H}_{(\mathbf{y},u)}\left( \mathbf{Y}^{u^{\ast}}(t),u^{\ast }(t),%
\mathbf{P}^{u^{\ast}}(t),Q^{u^{\ast}}(t);(\mathbf{Y}^{v}(t)-\mathbf{Y}%
^{u^{\ast}}(t),v(t)-u^{\ast}(t))\right) \\
={}_{V^{\ast}}\left\langle (D_{\mathbf{y}}A)^{\ast}(\mathbf{Y}%
^{u^{\ast}}(t),u^{\ast}(t))\mathbf{P}^{u^{\ast}}(t),\mathbf{Y}^{v}(t)-%
\mathbf{Y}^{u^{\ast}}(t)\right\rangle _{V}+D_{\mathbf{y}}L(\mathbf{Y}%
^{u^{\ast}}(t),u^{\ast}(t))(\mathbf{Y}^{v}(t)-\mathbf{Y}^{u^{\ast}}(t)) \\
-\int_{\Gamma}\bar{P}^{u^{\ast}}(t,\xi)D_{u}\gamma(\xi,\bar{Y}%
^{u^{\ast}}(t,\xi),u^{\ast}(t,\xi))\cdot(v(t,\xi)-u^{\ast}(t,\xi))\,d\xi \\
+L_{u}(\mathbf{Y}^{u^{\ast}}(t),u^{\ast}(t);v(t)-u^{\ast}(t)),
\end{multline*}
where $\mathcal{H}_{(\mathbf{y},u)}(\mathbf{y},u,\mathbf{p},q;\left( \mathbf{%
w},w\right) )$ denotes the directional derivative of $\mathcal{H}$ with
respect to $(\mathbf{y},u)$ in the direction $\left( \mathbf{w},w\right) $.
We make the remark that%
\begin{equation*}
\int_{\Gamma}\bar{P}^{u^{\ast}}(t,\xi)D_{u}\gamma(\xi,\bar{Y}%
^{u^{\ast}}(t,\xi),u^{\ast}(t,\xi))\cdot(v(t,\xi)-u^{\ast}(t,\xi))\,d\xi
\end{equation*}
may be infinite, but exists, by (\ref{Rel_max_princ}). From relation (\ref%
{rel_duality_dif}) we get%
\begin{align*}
\mathbb{E}\left\langle D_{\mathbf{y}}\Psi(\mathbf{Y}^{u^{\ast}}(T)),\mathbf{Y%
}^{v}(T)-\mathbf{Y}^{u^{\ast}}(T)\right\rangle _{H}\geq & \mathbb{E}\left[
\int_{0}^{T}\int_{\Gamma}\bar{P}^{u^{\ast}}(t,\xi)D_{u}\gamma(\xi,\bar {Y}%
^{u^{\ast}}(t,\xi),u^{\ast}(t,\xi))\cdot(v(t,\xi)-u^{\ast}(t,\xi ))\,d\xi\,dt%
\right] \\
& -\mathbb{E}\left[ \int_{0}^{T}\left[ L(t,\mathbf{Y}^{v}(t),v(t))-L(t,%
\mathbf{Y}^{u^{\ast}}(t),u^{\ast}(t))\right] dt\right] \\
& +\mathbb{E}\left[ \int_{0}^{T}L_{u}(\mathbf{Y}^{u^{\ast}}(t),u^{\ast
}(t);v(t)-u^{\ast}(t))\,dt\right] .
\end{align*}
The convexity of $\Psi$ implies that%
\begin{equation*}
\mathbb{E}\left[ \Psi(\mathbf{Y}^{v}(T))-\Psi(\mathbf{Y}^{u^{\ast}}(T))%
\right] \geq\mathbb{E}\left\langle D_{\mathbf{y}}\Psi(\mathbf{Y}^{u}(T)),%
\mathbf{Y}^{v}(T)-\mathbf{Y}^{u^{\ast}}(T)\right\rangle _{H};
\end{equation*}
consequently%
\begin{multline*}
J(v)-J(u^{\ast})\geq \\
\mathbb{E}\left\{ \int_{0}^{T}\int_{\Gamma}\left[ -\bar{P}%
^{u^{\ast}}(t,\xi)D_{u}\gamma(\xi,\bar{Y}^{u^{\ast}}(t,\xi),u^{\ast}(t,%
\xi))+D_{u}\bar{\ell}(\xi,\bar{Y}^{u^{\ast}}(t,\xi),u^{\ast}(t,\xi))\right]
\cdot(v(t,\xi)-u^{\ast}(t,\xi))\,d\xi\,dt\right\} .
\end{multline*}
By relation (\ref{Rel_max_princ}), the right-hand side of the above
inequality is positive, so $J(v)\geq J(u^{\ast})$.

If $v-u^{\ast}$ is not bounded, we can take, for $n\geq1$,%
\begin{equation*}
v_{n}(t,\xi):=\left\{
\begin{array}{ll}
v(t,\xi), & \left\vert v\left( t,\xi\right) -u^{\ast}(t,\xi)\right\vert \leq
n; \\
u^{\ast}(t,\xi), & \left\vert v\left( t,\xi\right) -u^{\ast}(t,\xi
)\right\vert >n.%
\end{array}
\right.
\end{equation*}
Applying It\^{o}'s formula to $\mathbf{Y}^{v_{n}}(t)-\mathbf{Y}^{v}(t)$, we
get, by the properties of $\mathbf{a}$ and $\gamma$,%
\begin{align*}
& \left\Vert \mathbf{Y}^{v_{n}}(t)-\mathbf{Y}^{v}(t)\right\Vert
_{H}^{2}+2\delta\int_{0}^{t}\left\Vert \mathbf{Y}^{v_{n}}(s)-\mathbf{Y}%
^{v}(s)\right\Vert _{V}^{2}\,ds \\
& \leq2\int_{0}^{t}\int_{\Gamma}\left[ \gamma(\xi,\bar{Y}^{v}(s),v_{n}(s))-%
\gamma(\xi,\bar{Y}^{v}(s),v(s))\right] \left( \bar{Y}^{v_{n}}(s)-\bar{Y}%
^{v}(s)\right) \,d\xi\,ds \\
& \leq C\int_{0}^{T}\int_{\Gamma}\left\vert \gamma(\xi,\bar{Y}%
^{v}(s),v_{n}(s))-\gamma(\xi,\bar{Y}^{v}(s),v(s))\right\vert ^{2}\,d\xi
\,ds+\delta\int_{0}^{T}\int_{\Gamma}\left\vert \bar{Y}^{v_{n}}(s)-\bar{Y}%
^{v}(s)\right\vert ^{2}\,d\xi\,ds.
\end{align*}
Therefore, by the dominated convergence theorem,%
\begin{equation*}
\lim_{n\rightarrow\infty}\sup_{t\in\lbrack0,T]}\left\Vert \mathbf{Y}%
^{v_{n}}(t)-\mathbf{Y}^{v}(t)\right\Vert _{H}^{2}=0.
\end{equation*}
This implies that $\lim_{n\rightarrow\infty}J(v_{n})=J(v)$; hence $J(v)\geq
J(u^{\ast})$.
\end{proof}

\noindent\textbf{Example.} The convexity hypothesis for $\mathcal{H}(\mathbf{%
\cdot},\cdot,\mathbf{P}^{u^{\ast}}(t),Q^{u^{\ast}}(t))$ is hard to verify in
practice, since the direction of $\nabla P^{u^{\ast}}$ and the sign of $\bar{%
P}^{u^{\ast}}$ are not \emph{a priori} determinable. However, under
convexity assumptions on the coefficients, we just need to strengthen
condition (\ref{Rel_max_princ}) in order to derive a sufficient optimality
condition.

We will take $\mathbf{a}(x,\zeta)=\zeta,\ (x,\zeta)\in\mathcal{O}\times%
\mathbb{R}^{n}$ (or, more general, linear with respect to $\zeta$).
Moreover, the functions $\ell(x,\cdot)$, $\psi(x,\cdot)$ and $\bar{\psi}%
(\xi,\cdot)$ are supposed to be convex, $dx$-a.e. on $\mathcal{O}$,
respectively $d\xi$-a.e. on $\Gamma$. For $\sigma\in\{-1,1\}$, on $\gamma$
and $\bar{\ell}$ we impose that:

\begin{itemize}
\item $(\bar{y},u)\mapsto-\sigma\gamma(\xi,\bar{y},u)$ is convex, $d\xi$%
-a.e. on $\Gamma$;

\item $(\bar{y},u)\mapsto\bar{\ell}(\xi,\bar{y},u)$ is convex, $d\xi$-a.e.
on $\Gamma$.
\end{itemize}

Let, for $(\xi,\bar{y},u)\in\Gamma\times\mathbb{R}\times U$,%
\begin{equation*}
S(\xi,\bar{y},u):=\left\{ \alpha\in\mathbb{R}\mid\alpha D_{u}\gamma(\xi ,%
\bar{y},u)-D_{u}\bar{\ell}(\xi,\bar{y},u)\in\mathcal{N}_{U}(u)\right\} ,
\end{equation*}
where $\mathcal{N}_{U}(u)$ is the exterior normal cone to $U$ in $u$ if $%
u\in\partial U$ and $\mathcal{N}_{U}(u)=\{0\}$ if $u\in\limfunc{int}U$.

A sufficient condition of optimality for an admissible control $u^{\ast}$ is
then%
\begin{equation}
\bar{P}^{u^{\ast}}(t,\xi)\in S(\xi,\bar{Y}^{u^{\ast}}(t,\xi),u^{\ast}(t,%
\xi))\cap\sigma\mathbb{R}_{+},\ d\xi dt\text{-a.e.}
\label{Rel_suff_max_princ}
\end{equation}
This condition is obviously equivalent to (\ref{Rel_max_princ}) when $S(\xi,%
\bar{y},u)\cap\sigma\mathbb{R}_{-}^{\ast}=\emptyset,\ \forall\bar{y}\in%
\mathbb{R},\ d\xi$-a.e.

\section{Existence of an optimal control}

Let now study the problem of the existence of an optimal control under the
convexity conditions on the coefficients of the cost functional and
linearity of control.


Assume that $U$ is bounded and:

\begin{description}
\item[(C$_{3}$)] $\gamma(\xi,\bar{y},u)=\tilde{\gamma}(\xi,\bar{y})+\beta
(\xi)\cdot u$, where $\tilde{\gamma}$ satisfies conditions (C$_{0}$)--(C$_{2}
$) and $\beta\in L^{\infty}(\Gamma;\mathbb{R}^{m})$;

\item[(F$_{2}$)] $\psi(x,\cdot)$ and $\bar{\psi}(\xi,\cdot)$ are convex, $dx$%
-a.e. on $\mathcal{O}$, respectively $d\xi$-a.e. on $\Gamma$;

\item[(L$_{2}$)] $\ell(x,\cdot)$ and $\bar{\ell}(\xi,\cdot)$ are convex, $dx$%
-a.e. on $\mathcal{O}$, respectively $d\xi$-a.e. on $\Gamma$.
\end{description}

\noindent\textit{Remark.} Notice that our assumptions, although stringent,
cover most of the cases in the literature. For instance, Debussche, Fuhrman
and Tessitore \cite{DebFuhTes07}, Fabbri and Goldys \cite{Fabbri2008} and
Bonaccorsi, Confortola and Mastrogiacomo \cite{BonConMas08} consider linear
control problems on the boundary (for Neumann, Dirichlet and dynamic
boundary conditions, respectively), and all those papers are concerned with
the one-dimensional problem. These papers, further, consider linear
quadratic term in the cost functional, that hence satisfy assumptions (F$_{2}
$) and (L$_{2}$).

On the other hand, in this paper we do not consider the structure condition
that is necessary to apply the forward-backward approach of Fuhrman and
Tessitore \cite{FuhTes02}, i.e., the condition that the control and the
noise enters the equation with the same operator in front of them.

\medskip

\begin{theorem}
\label{Th: Exist}Under the above assumptions, there exists at least an
optimal control.
\end{theorem}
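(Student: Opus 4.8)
The plan is to use the direct method of the calculus of variations: take a minimizing sequence $(u_n) \subseteq \mathcal{U}$ with $J(u_n) \to \inf_{u \in \mathcal{U}} J(u)$, extract a weakly convergent subsequence, and show the limit is admissible and optimal. Since $U$ is bounded, the sequence $(u_n)$ is bounded in $L^2(\Omega \times [0,T]; L^2(\Gamma; \mathbb{R}^m))$, hence — this space being a Hilbert space — admits a subsequence (not relabelled) converging weakly to some $u^* \in L^2(\Omega \times [0,T]; L^2(\Gamma; \mathbb{R}^m))$. Progressive measurability of $u^*$ passes to the weak limit because the progressively measurable processes form a closed subspace; and $u^*(t,\xi) \in U$ a.e. because $U$ is closed and convex, so the set $\{v : v(t,\xi) \in U \text{ a.e.}\}$ is convex and closed, hence weakly closed (Mazur). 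Thus $u^* \in \mathcal{U}$.

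Next I would pass to the limit in the state equation. Write $\mathbf{Y}^n := \mathbf{Y}^{u_n}$. By the a priori bound of Theorem \ref{Th_Ex_state_eq} (uniform in $n$, since the estimate depends on $u_n$ only through $\|u_n\|_{L^2(\Gamma)}$, which is bounded because $U$ is bounded), the sequence $(\mathbf{Y}^n)$ is bounded in $L^2(\Omega \times [0,T]; V)$, so a further subsequence converges weakly there to some $\mathbf{Y}^*$. Because the control enters the drift $A$ linearly via $\gamma(\xi,\bar y,u) = \tilde\gamma(\xi,\bar y) + \beta(\xi)\cdot u$ (condition (C$_3$)), the term $\int_0^t \int_\Gamma \beta(\xi)\cdot u_n(s,\xi)\,\bar z\,d\xi\,ds$ is linear and continuous in $u_n$, so it passes to the weak limit directly. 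The genuinely nonlinear term is $-\int_\mathcal{O} \mathbf{a}(x,\nabla Y^n)\cdot\nabla z\,dx$ and, in the dynamic boundary part, $-\int_\Gamma \tilde\gamma(\xi,\bar Y^n)\bar z\,d\xi$. To handle these I would use the compact embedding $V \hookrightarrow H$ (stated in the Preliminaries) together with an Aubin–Lions / Itô-type compactness argument: the bound $\mathbb{E}\sup_t \|\mathbf{Y}^n(t)\|_H^2 < \infty$ plus the form of the equation gives equicontinuity estimates, yielding strong convergence of a subsequence of $\bar Y^n$ to $\bar Y^*$ in $L^2(\Omega\times[0,T]\times\Gamma)$ and hence a.e.; then $\tilde\gamma(\xi,\bar Y^n) \to \tilde\gamma(\xi,\bar Y^*)$ passes to the limit by (C$_1$) and dominated convergence. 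For the Leray–Lions term $\mathbf{a}(x,\nabla Y^n)$ I would invoke the classical monotonicity trick (Minty–Browder): using the strong monotonicity (A$_2$) of $\mathbf{a}$ — exactly the standard argument for variational solutions of Leray–Lions equations, adapted pathwise with expectations — to identify the weak limit of $\mathbf{a}(\cdot,\nabla Y^n)$ with $\mathbf{a}(\cdot,\nabla Y^*)$. This shows $\mathbf{Y}^* = \mathbf{Y}^{u^*}$ is the variational solution associated to $u^*$.

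Finally I would show $J(u^*) \le \liminf_n J(u_n) = \inf J$, which gives optimality of $u^*$. This is where convexity enters: under (F$_2$) and (L$_2$), the maps $\mathbf{y} \mapsto \Psi(\mathbf{y})$ and $(\mathbf{y},u) \mapsto L(\mathbf{y},u)$ are convex and (by the growth conditions (F$_1$), (L$_1$)) continuous on $H$, respectively on $H \times L^2(\Gamma;U)$; a convex continuous functional on a Hilbert space is weakly lower semicontinuous, and this extends to the integral functional $\mathbb{E}\int_0^T L(\mathbf{Y}^n(t),u_n(t))\,dt + \mathbb{E}\,\Psi(\mathbf{Y}^n(T))$ by a standard Ioffe-type lower semicontinuity theorem for integral functionals convex in the joint variable $(\mathbf{y},u)$. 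Since $(\mathbf{Y}^n, u_n) \rightharpoonup (\mathbf{Y}^*, u^*)$ (using the weak convergence of $\mathbf{Y}^n(T)$ in $L^2(\Omega;H)$, which follows from the same uniform bounds), we conclude $J(u^*) \le \liminf_n J(u_n)$, hence $u^*$ is optimal.

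The main obstacle is the passage to the limit in the nonlinear drift: establishing enough compactness, in the stochastic setting, to get a.e. convergence of $\bar Y^n$ and then closing the Minty–Browder argument for $\mathbf{a}(\cdot,\nabla Y^n)$ simultaneously with the stochastic integral term $\int_0^t B\,dW(s)$ (which is the same for all $n$ and therefore causes no trouble, but must be carried along correctly). Everything else — admissibility of the weak limit, weak lower semicontinuity of the cost — is routine once the convexity hypotheses (F$_2$), (L$_2$) and the linear-control structure (C$_3$) are in force.
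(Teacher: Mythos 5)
Your proposal follows essentially the same route as the paper's proof: a minimizing sequence, weak compactness from the boundedness of $U$, Aubin-type compactness through the compact embedding $V\hookrightarrow H$ (exploiting that the stochastic integral is the same for every $n$, so it drops out after subtracting a fixed trajectory) to obtain strong convergence in $H$, the Minty--Browder/maximal-monotonicity argument to identify the weak limit of the nonlinear drift, and convexity of the cost functional under (F$_{2}$), (L$_{2}$) for weak lower semicontinuity. The only cosmetic difference is that the paper treats $\mathbf{a}$ and $\tilde{\gamma}$ jointly as a single maximal monotone operator $\mathcal{A}$ and identifies both limits at once via Corollary 2.4 of \cite{Bar10}, rather than handling the boundary nonlinearity separately by a.e.\ convergence and dominated convergence.
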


The necessary condition (\ref{Rel_max_princ}) provides more information
about the optimal control whose existence is guaranteed by the above result.
In fact, it can be written as%
\begin{equation*}
\beta(\xi)\bar{P}^{u^{\ast}}(t,\xi)-D_{u}\bar{\ell}(\xi,\bar{Y}%
^{u^{\ast}}(t,\xi),u^{\ast}(t,\xi))\in\mathcal{N}_{U}(u^{\ast}(t,\xi)),\
d\xi dt\text{-a.e.}
\end{equation*}
(recall that $\mathcal{N}_{U}(u)$ is the exterior normal cone to $U$ in $u$
if $u\in\partial U$ and $\mathcal{N}_{U}(u)=\{0\}$ if $u\in\limfunc{int}U$).

\begin{proof}
By It\^{o}'s formula applied to $\left\Vert Y^{u}\right\Vert _{H}^{2}$, it
is clear that%
\begin{equation*}
\mathbb{E}\sup_{t\in\lbrack0,T]}\left\Vert Y^{u}(t)\right\Vert _{H}^{2}\leq
C,
\end{equation*}
for every control $u$ (we recall that we use the generic term $C$ for
positive constants, whose values can change from one place to another).
Since $U$ is bounded, $J$ is bounded, too. Let $\left( u_{n}\right) $ be a
sequence of controls such that $J(u_{n})\searrow\inf_{u\in\mathcal{U}}J(u)$.
There exists $u^{\ast}\in L^{2}\left( \Omega\times\lbrack0,T];L^{2}(\Gamma;%
\mathbb{R}^{m})\right) $ such that a subsequence of $\left( u_{n}\right) $
converges weakly to $u^{\ast}$. Without restricting the generality, we can
suppose that the whole sequence converges to $u^{\ast}$.

For the sake of simplicity, let us denote $\mathbf{Y}^{n}:=\mathbf{Y}^{u^{n}}
$. Let us show that $\mathbf{Y}^{n}$ converges to $\mathbf{Y}^{u^{\ast}}$.
We have that, exactly as in (\ref{rel_bound}), that
\begin{equation*}
\sup_{n\in\mathbb{N}}\left[ \sup_{t\in\lbrack0,T]}\mathbb{E}\left\Vert
\mathbf{Y}^{n}(t)\right\Vert _{H}^{2}+\mathbb{E}\int_{0}^{T}\left\Vert
\mathbf{Y}^{n}(t)\right\Vert _{V}^{2}dt\right] <+\infty.
\end{equation*}
Consequently, the sequences $\left( \mathbf{a}(\cdot,\nabla Y^{n})\right)
_{n\geq1}$ and $\left( \tilde{\gamma}(\cdot,\bar{Y}^{n})\right) _{n\geq1}$
are also bounded in $L^{2}\left( \Omega\times\lbrack0,T];L^{2}\left(
\mathcal{O}\right) \right) $, respectively in $L^{2}\left( \Omega
\times\lbrack0,T];L^{2}\left( \Gamma\right) \right) $. Therefore, at least
on a subsequence:

\begin{itemize}
\item $\mathbf{Y}^{n}$ converges weakly in $L^{2}\left( \Omega\times
\lbrack0,T];V\right) $ to a process $\mathbf{Y}^{\ast}=(Y^{\ast},\bar {Y}%
^{\ast})$;

\item $\mathbf{Y}^{n}(t)$ converges weakly in $L^{2}\left( \Omega;H\right) $
to $\mathbf{Y}^{\ast}(t)$ for every $t\in\lbrack0,T]$;

\item $\mathbf{a}(\cdot,\nabla Y^{n})$ converges weakly in $L^{2}\left(
\Omega\times\lbrack0,T];L^{2}\left( \mathcal{O}\right) \right) $ to a
process $\chi$;

\item $\tilde{\gamma}(\cdot,\bar{Y}^{n})$ converges weakly in $L^{2}\left(
\Omega\times\lbrack0,T];L^{2}\left( \Gamma\right) \right) $ to a process $%
\varkappa$.
\end{itemize}

If $\mathbf{z}=(z,\bar{z})\in V$, then%
\begin{multline*}
\left\langle \mathbf{Y}^{n}(t),\mathbf{z}\right\rangle _{H}=\left\langle
\mathbf{y}_{0},\mathbf{z}\right\rangle _{H}-\int_{0}^{t}\left[ \left\langle
\mathbf{a}(\cdot,\nabla Y^{n}(s)),\nabla z\right\rangle _{L^{2}\left(
\mathcal{O}\right) }+\left\langle \tilde{\gamma}(\cdot,\bar{Y}^{n}(s)),\bar{z%
}\right\rangle _{L^{2}(\Gamma)}\right] \,ds \\
-\int_{0}^{t}\int_{\Gamma}\beta(\xi)\bar{z}\cdot u^{n}(s,\xi)\,d\xi
\,ds+\left\langle \int_{0}^{t}B\,dW(s),\mathbf{z}\right\rangle _{H},\
t\in\lbrack0,T].
\end{multline*}
Passing to the limit in this relation, we obtain, for $t\in\lbrack0,T]$,%
\begin{equation*}
\left\langle \mathbf{Y}^{\ast}(t),\mathbf{z}\right\rangle _{H}=\left\langle
\mathbf{y}_{0},\mathbf{z}\right\rangle _{H}-\int_{0}^{t}\left[ \left\langle
\chi(s),\nabla z\right\rangle _{L^{2}\left( \mathcal{O}\right)
}+\left\langle \varkappa(s),\bar{z}\right\rangle _{L^{2}(\Gamma)}\right]
ds-\int_{0}^{t}\int_{\Gamma}\beta(\xi)\bar{z}\cdot u^{\ast}(s,\xi
)\,d\xi\,ds+\left\langle \int_{0}^{t}B\,dW(s),\mathbf{z}\right\rangle _{H},
\end{equation*}
meaning that $\mathbf{Y}^{\ast}$ satisfies the relation%
\begin{equation*}
\mathbf{Y}^{\ast}(t)=\mathbf{y}_{0}+\int_{0}^{t}\tilde{A}(s)\,ds+%
\int_{0}^{t}B\,dW(s),\ t\in\lbrack0,T],
\end{equation*}
where the $V^{\ast}$-valued, square-integrable process $\tilde{A}$ is
defined by%
\begin{equation*}
_{V^{\ast}}\left\langle \tilde{A}(s),(z,\bar{z})\right\rangle _{V}:=-\int_{%
\mathcal{O}}\chi(s)\cdot\nabla z\,dx-\int_{\Gamma}\left[ \varkappa(s)+\beta(%
\xi)u^{\ast}(s)\right] \bar{z}\,d\xi,\ (z,\bar{z})\in V.
\end{equation*}
In order to assert that $\mathbf{Y}^{\ast}=\mathbf{Y}^{u^{\ast}}$, we have
to prove the identification $\tilde{A}(s)=A(\mathbf{Y}^{\ast}(s),u^{\ast}(s))
$, $dt$-a.s. For that, we will use some results from the theory of maximal
monotone operators (see \cite{Bar10}, for example).

We have that, $\mathbb{P}$-a.s., $\mathbf{Y}^{n}(\cdot)-\mathbf{Y}%
^{1}(\cdot)\in W^{1,2}(0,T;V^{\ast})$ and
\begin{equation*}
\frac{d}{dt}\left( \mathbf{Y}^{n}(t)-\mathbf{Y}^{1}(t)\right) =A(\mathbf{Y}%
^{n}(t),u^{n}(t))-A(\mathbf{Y}^{1}(t),u^{1}(t)),\ dt\text{-a.e.}
\end{equation*}
Moreover, we have%
\begin{equation*}
\left\Vert \mathbf{Y}^{n}(t)-\mathbf{Y}^{1}(t)\right\Vert _{H}^{2}+\delta
\int_{0}^{t}\left\Vert \mathbf{Y}^{n}(s)-\mathbf{Y}^{1}(s)\right\Vert
_{V}^{2}ds\leq C\left\Vert \beta\right\Vert _{L^{\infty}(\Gamma;\mathbb{R}%
^{m})},\ \forall t\in\lbrack0,T]
\end{equation*}
and%
\begin{align*}
\int_{0}^{T}\left\Vert A(\mathbf{Y}^{n}(t),u^{n}(t))-A(\mathbf{Y}%
^{1}(t),u^{1}(t))\right\Vert _{V^{\ast}}^{2}dt & \leq C\left(
1+\int_{0}^{T}\left( \left\Vert \mathbf{Y}^{n}(t)\right\Vert
_{V}^{2}+\left\Vert \mathbf{Y}^{1}(t)\right\Vert _{V}^{2}\right) dt\right) \\
& \leq C\left( 1+\int_{0}^{T}\left( \left\Vert \mathbf{Y}^{n}(t)-\mathbf{Y}%
^{1}(t)\right\Vert _{V}^{2}+\left\Vert \mathbf{Y}^{1}\right\Vert
_{V}^{2}\right) dt\right) .
\end{align*}
Consequently, the sequence $\left( \mathbf{Y}^{n}(\cdot)-\mathbf{Y}%
^{1}(\cdot)\right) _{n\geq1}$ is bounded in $L^{2}\left( 0,T;V\right) \cap
W^{1,2}(0,T;V^{\ast})$, $\mathbb{P}$-a.s. By a well-known result of Aubin
(see, for example, Theorem 1.20 in \cite{Bar10}), since the inclusion $%
V\subseteq H$ is compact, $\left( \mathbf{Y}^{n}(\cdot)-\mathbf{Y}%
^{1}(\cdot)\right) _{n\geq1}$ is relatively compact in $L^{2}\left(
0,T;H\right) $.

As we already have that $\left( \mathbf{Y}^{n}-\mathbf{Y}^{1}\right)
_{n\geq1}$ converges weakly to $\mathbf{Y}^{\ast}-\mathbf{Y}^{1}$ in $%
L^{2}\left( \Omega\times\lbrack0,T];V\right) $, we infer that $\left(
\mathbf{Y}^{n}(\cdot)-\mathbf{Y}^{1}(\cdot)\right) _{n\geq1}$ converges
strongly to $\mathbf{Y}^{\ast}(\cdot)-\mathbf{Y}^{1}(\cdot)$ in $L^{2}\left(
0,T;H\right) $, $\mathbb{P}$-a.s. By the dominated convergence theorem $%
\mathbf{Y}^{n}$ converges strongly to $\mathbf{Y}^{\ast}$ in $L^{2}\left(
\Omega\times(0,T);H\right) $.

Let us define the operator $\mathcal{A}$ on $L^{2}\left( \Omega\times\left(
0,T\right) \times\mathcal{O}\right) \times L^{2}\left( \Omega\times\left(
0,T\right) \times\Gamma\right) $ by%
\begin{equation*}
\mathcal{A}(\zeta,\bar{y}):=\left( \mathbf{a}(\cdot,\zeta(\cdot )),\tilde{%
\gamma}(\cdot,\bar{y})\right)
\end{equation*}

Since $\mathcal{A}$ is hemicontinuous and monotone, by Theorem 2.4 in \cite%
{Bar10}, $\mathcal{A}$ is a maximal monotone operator.

It\^{o}'s formula applied to $\mathbf{Y}^{n}$, respectively $\mathbf{Y}%
^{\ast }$, yield
\begin{align*}
2\mathbb{E}\left[ \int_{0}^{T}\int_{\mathcal{O}}\mathbf{a}(x,\nabla
Y^{n}(t))\cdot\nabla Y^{n}(t)\,dx\,dt\right] & +2\mathbb{E}\left[
\int_{0}^{T}\int_{\Gamma}\tilde{\gamma}(\xi,\bar{Y}^{n}(t))\bar{Y}%
^{n}(t)\,d\xi\,dt\right] \\
=-\mathbb{E}\left\Vert \mathbf{Y}^{n}(T)\right\Vert _{H}^{2} & -2\mathbb{E}%
\left[ \int_{0}^{T}\beta(\xi)\bar{Y}^{n}(t)\cdot u^{n}(t)\,d\xi\,dt\right]
+\left\Vert \mathbf{y}_{0}\right\Vert _{H}^{2}+T\left\Vert B\right\Vert
_{L_{2}(H)}^{2}
\end{align*}
and%
\begin{align*}
2\mathbb{E}\left[ \int_{0}^{T}\int_{\mathcal{O}}\chi(t)\cdot\nabla Y^{\ast
}(t)\,dx\,dt\right] & +2\mathbb{E}\left[ \int_{0}^{T}\int_{\Gamma
}\varkappa(t)\bar{Y}^{\ast}(t)\,d\xi\,dt\right] \\
=-\mathbb{E}\left\Vert \mathbf{Y}^{\ast}(T)\right\Vert _{H}^{2} & -2\mathbb{E%
}\left[ \int_{0}^{T}\beta(\xi)\bar{Y}^{\ast}(t)\cdot u^{\ast }(t)\,d\xi\,dt%
\right] +\left\Vert \mathbf{y}_{0}\right\Vert _{H}^{2}+T\left\Vert
B\right\Vert _{L_{2}(H)}^{2}.
\end{align*}
Consequently, since the norm in $H$ is lower-semicontinuous with respect to
the weak topology, $u^{n}$ converges weakly to $u^{\ast}$ in $L^{2}\left(
\Omega\times\lbrack0,T];L^{2}(\Gamma;\mathbb{R}^{m})\right) $ and $\bar {Y}%
^{n}$ converges strongly to $\mathbf{Y}^{\ast}$ in $L^{2}\left(
\Omega\times\lbrack0,T];L^{2}(\Gamma)\right) $, we get
\begin{equation*}
\limsup_{\varepsilon\rightarrow0}\left\langle \mathcal{A}(\nabla Y^{n},\bar {%
Y}^{n}),(\nabla Y^{n},\bar{Y}^{n})\right\rangle \leq\left\langle
(\chi,\varkappa),(\nabla Y^{\ast},\bar{Y}^{\ast})\right\rangle ,
\end{equation*}
where $\left\langle \cdot,\cdot\right\rangle $ denotes the scalar product in
$L^{2}\left( \Omega\times\left( 0,T\right) \times\mathcal{O}\right) \times
L^{2}\left( \Omega\times\left( 0,T\right) \times\Gamma\right) $. By
Corollary 2.4 in \cite{Bar10},%
\begin{equation*}
(\chi,\varkappa)=\mathcal{A}(\nabla Y^{\ast},\bar{Y}^{\ast}),
\end{equation*}
\textit{i.e.}%
\begin{align*}
\chi(t) & =\mathbf{a}(\cdot,\nabla Y^{\ast}(t)),\ \mathbb{P}\text{-a.s.}%
\times dx\,dt\text{-a.e.;} \\
\varkappa(t) & =\tilde{\gamma}(\cdot,\bar{Y}^{\ast}(t)),\ \mathbb{P}\text{%
-a.s.}\times d\xi\,dt\text{-a.e.}
\end{align*}
By the uniqueness of the solution of equation (\ref{Eq_state}), we have that
$\mathbf{Y}^{\ast}=\mathbf{Y}^{u^{\ast}}$.

The functional $\mathcal{J}:L^{2}\left( \Omega\times\lbrack0,T];H\right)
\times L^{2}\left( \Omega\times\lbrack0,T];L^{2}(\Gamma;\mathbb{R}%
^{m})\right) \rightarrow\mathbb{R}$, defined by%
\begin{equation*}
\mathcal{J}(\mathbf{Y},u):=\mathbb{E}\left[ \int_{0}^{T}L(\mathbf{Y}%
(t),u(t))dt+\Psi(\mathbf{Y}(T))\right]
\end{equation*}
is strongly continuous. By conditions (F$_{2}$) and (L$_{2}$), it is also
convex and therefore weakly lower semi-continuous. As a consequence, $%
\liminf_{n\rightarrow\infty}\mathcal{J}(\mathbf{Y}^{n},u^{n})\geq \mathcal{J}%
(\mathbf{Y}^{\ast},u^{\ast})$. Since $\mathcal{J}(\mathbf{Y}%
^{n},u^{n})=J(u^{n})$, $\mathcal{J}(\mathbf{Y}^{\ast},u^{\ast})=J(u^{\ast})$
and $J(u^{n})\rightarrow\inf_{u\in\mathcal{U}}J(u)$, $u^{\ast}$ has to be an
optimal control.
\end{proof}

\noindent\textbf{Acknowledgement.} The research of A.~Z\u{a}linescu was
supported by the 2010 PRIN project: \textquotedblleft Equazioni di
evoluzione stocastiche con controllo e rumore al bordo\textquotedblright.

\end{document}